\theoremstyle{definition}
\newtheorem{Def}{Definition}[section]
\newtheorem{Bsp}[Def]{Example}
\newtheorem{Bem}[Def]{Remark}
\theoremstyle{plain}
\newtheorem{Prop}[Def]{Proposition}
\newtheorem{Thm}[Def]{Theorem}
\newtheorem*{Thm*}{Theorem}
\newtheorem{Lem}[Def]{Lemma}
\newtheorem{Kor}[Def]{Corollary}
\newtheorem*{Kor*}{Corollary}
\newtheorem*{con*}{Conjecture}
\newtheorem{frag}{Question}
\newtheorem*{verm*}{Vermutung}
\newcommand{\Supp}{\operatorname{Supp}}
\newcommand{\lk}{\operatorname{lk}} 
\newcommand{\cL}{{\mathcal L}}
\newcommand{\cO}{{\mathcal O}}
\newcommand{\C}{{\mathbb C}}
\newcommand{\R}{{\mathbb R}}
\newcommand{\pp}{\mathbb{P}}
\newcommand{\N}{{\mathbb N}}
\newcommand{\Z}{{\mathbb Z}}
\title{The separating semigroup of a real curve}
\author{Mario Kummer}
\author{Kristin Shaw}
\address{Technische Universit\"at Berlin,
Institut f\"ur Mathematik, 
Stra{\ss}e des 17.~Juni 136
10623 Berlin, Germany} 
\email{kummer@tu-berlin.de}
\address{Department of Mathematics, University of Oslo, P.O.~Box 1053,   Blindern, 0316 Oslo, Norway}
\email{krisshaw@math.uio.no}
\thanks{The second author's research was supported in part by a postdoctoral fellowship from the  Alexander von Humboldt foundation. The project {was} completed while both authors were at the Max-Planck-Institute for Mathematics in the Sciences, Leipzig. We thank the institute for the excellent working conditions. }
\newcommand{\comment}[1]{}
\begin{document}

\subjclass[2010]{Primary: 14P99, 14H50; Secondary: 14H51}

\begin{abstract}

 We introduce the separating  semigroup of a real algebraic curve of dividing type. The elements of this semigroup  record the  possible  degrees of the covering maps obtained by restricting separating morphisms to the real part of the curve. 
We also introduce the hyperbolic semigroup which consists of elements of the separating semigroup 
arising from morphisms which are  compositions of a linear projection with an embedding of the curve to some projective space.

We completely determine both semigroups in the case of maximal curves.  We also prove that any embedding of a real curve of dividing type to projective space of sufficiently high degree is hyperbolic. 
Using these semigroups we show that the hyperbolicity locus of an embedded curve is in general not connected. 

\vspace{0.3cm}
\noindent
{\sc Resum\'e.}
Nous introduisons le semi-groupe s\'eparant d'une courbe alg\'ebrique r\'eelle s\'eparante. Les \'el\'ements de ce semi-groupe gardent trace des degr\'es possibles des rev\^etements du cercle obtenus par restriction \`a la partie r\'eelle de la courbe
 des morphismes s\'eparants. Nous introduisons aussi le semi-groupe hyperbolique, compos\'e des \'el\'ements  du semi-groupe s\'eparant provenant des morphismes qui sont la composition d'une projection lin\'eaire et d'un plongement de la courbe dans un espace projectif. 

Nous d\'eterminons les deux groupes dans le cas des courbes maximales. Nous d\'emontrons aussi que tout plongement d'une courbe r\'eelle s\'eparante de degr\'e suffisamment grand est hyperbolique. En utilisant ces semi-groupes, nous montrons que le lieu hyperbolique d'une courbe plong\'ee n'est en g\'en\'eral pas connexe.
\end{abstract}
\maketitle

\section{Introduction}
Here a curve will always be a non-singular projective and geometrically irreducible algebraic curve over $\R$. Furthermore, we always use $\pp^n$ to denote  
the projective space defined over $\R$. For a variety $V$ defined over $\R$ we denote by $V(\R)$ and $V(\C)$ the real and complex points of $V$, respectively.

A basic fact concerning the classification of real algebraic curves, or real Riemann surfaces,  is the following dichotomy which goes back to Klein \cite[\S 23]{Kl23}: 
If $X$ is a curve, then the set $X(\C)\smallsetminus X(\R)$ has either one or two connected components. If the latter is the case, $X$ is called \textit{of dividing type}. Curves of dividing type are often called \textit{curves of type I} or \textit{separating} in the literature. If there exists a morphism $f \colon X\to\pp^1$ with the property $f^{-1}(\pp^1(\R))=X(\R)$, then $X(\C)\smallsetminus X(\R)$ can not be connected. This is since $\pp^1(\C)\smallsetminus \pp^1(\R)$ has two connected components and their preimages under $f$ yield two connected components of $X(\C)\smallsetminus X(\R)$. Therefore, such a morphism is also called \textit{separating} since it certifies that $X$ is of dividing type. It follows from the work of {Ahlfors} \cite[\S 4.2]{Ahl50}, though proved in a different context, 
 that conversely every separating curve admits a separating morphism. 

{Rokhlin} \cite{rohlin} used the existence of separating morphisms given by pencils as a certification of certain real plane curves being of dividing type.
In Mikhalkin's \cite{Mik:amoebas}  study of extremal topology of real curves in $(\C^*)^2$, he showed that 
the logarithmic Gauss map of a simple Harnack curve is separating. Conversely, it was shown by Passare and Risler  {\cite{RislerPassare}} that if a planar curve has separating logarithmic Gauss map, then it is a Harnack curve.

The existence of separating morphisms and their properties have  been considered by several authors \cite{huisman, gabard, cophuis, cop}. For example, Gabard \cite{gabard} showed that every separating curve $X$ admits a separating morphism of degree at most $\frac{g+r+1}{2}$ where $g$ is the genus and $r$ the number of connected components of $X(\R)$. Later Coppens \cite{cop} constructed, for every value of  $k$ between $r$ and $\frac{g+r+1}{2}$,  a separating curve $X$ of genus $g$ with $X(\R)$ having $r$ components such that $k$ is the smallest possible degree of a separating morphism. 

In this work, we take a complementary approach. Namely, we fix a curve $X$ of dividing type of genus $g$ and study the set of all separating morphisms $X\to\pp^1$. Let $X(\R)$ consist of $r$ connected components $X_1,\ldots,X_r$. Since $X$ is of dividing type $r+g$ must be odd.
A separating morphism $f$ is always unramified over $X(\R)$ \cite[Theorem 2.19]{kummer2015real}. Therefore, the restriction of $f$ to each $X_i$ is a covering map of $\pp^1(\R)$.   
{This implies that the  degree of a separating morphism is  at least $r$.}
Let  $\N$  denote the positive integers, so $0 \not \in \N$. 
We denote by $d_i(f) \in \mathbb{N}$ the degree of the covering map  $X_i\to\pp^1$ and set $d(f):=(d_1(f),\ldots,d_r(f)) { \in \mathbb{N}^r}$.  
For  $d = (d_1, \dots, d_r)  \in \N^r$ we let $|d| := \sum_{i = 1}^r d_i$. 
Our first main object of interest is the set of all such  
 degree partitions.

\begin{Def}
 The set \[\textnormal{Sep}(X)=\{d(f)\in\N^r \ | \, f \colon X\to\pp^1 \textrm{ separating}\}\] is called the \textit{separating semigroup}.

\end{Def}
Since we assume that 
 $X$ is a separating curve,  the set  $\textnormal{Sep}(X)$ is always non-empty. 
The term semigroup is justified by the fact that this set  turns out to be closed under componentwise addition, see Proposition \ref{prop:semigroup}. 

In Corollary \ref{cor:orthants}, we show that $d+\Z_{\geq0}^{r}\subset \textnormal{Sep}(X)$ for every $d\in \textnormal{Sep}(X)$  with $|d|$ sufficiently large. Our main technique is making use of \textit{interlacing sections}: Two sections $s$ and $s'$ of a line bundle on $X$ are called interlacing if they both have only simple and real zeros and if on each component $X_i$ between each two consecutive zeros of $s$ there is exactly one zero of $s'$. This notion generalizes the notion of interlacing polynomials to sections of {line bundles on} algebraic curves. The concept of interlacing polynomials has attracted a lot of attention since Marcus, Spielman and Srivastava used it to solve the Kadison--Singer problem as well as to find bipartite Ramanujan graphs of all degrees \cite{interl1, interl2}. We will make use of the fact that the morphism to $\pp^1$ defined by $s$ and $s'$ is separating if and only if $s$ and $s'$ are interlacing. This is proved in Lemma \ref{lem:interl}.

We also study the subset $\textnormal{Sep}(X)$ consisting of all degree partitions that are realized by a separating morphism which is actually a linear projection of some embedding of $X$ in projective space from a linear space disjoint from $X$. This is motivated by the following definition from \cite{Sha14}.

\begin{Def}
 Let $X\subset\pp^n$ {be a curve} and $E\subset\pp^n$ be a linear subspace of codimension two such that  $E\cap X=\emptyset$. Then $X$ is \textit{hyperbolic with respect to} $E$ if the linear projection $\pi_E:X\to\pp^1$ from $E$ is separating.  
\end{Def}

Following the terminology of \cite{Sha14} we will call such embedded curves \textit{hyperbolic}.  These curves are a generalization of planar curves defined by hyperbolic polynomials {in three homogeneous variables}. In general hyperbolic polynomials have attracted interest in different areas of mathematics like partial differential equations \cite{pde1, pde2}, optimization \cite{optim1, optim2} and combinatorics \cite{combin1, comb2}. 

Whereas at first sight hyperbolicity might seem to be a rare phenomenon, it is actually quite ubiquitous in the case of curves as justified by the next theorem. It  says that  for a given separating curve $X$, \textit{every} embedding of high enough degree turns out to be hyperbolic. We first remark any divisor $D$ on a curve $X$ with degree $k > 2g$ is very ample by \cite[Corollary 3.2 b)]{Hart77}, and therefore the map $X  \to \pp(\mathscr{L}(D)^\vee)$ is an embedding.

\begin{Thm}\label{thm:alwayshyperbolic}
 Let $X$ be a curve of dividing type of genus $g$. There exists a $k>2g$ with the following property: For any divisor $D$ of degree at least $k$ the corresponding embedding of $X$ to $\pp(\mathscr{L}(D)^\vee)$ is hyperbolic.
\end{Thm}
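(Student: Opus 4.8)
The plan is to reduce the statement to the construction of a pair of interlacing sections and then to produce such a pair using the real Abel--Jacobi map. First I would fix $D$ of degree $m$ and unwind hyperbolicity for the complete embedding $X\hookrightarrow\pp(\mathscr{L}(D)^\vee)$: a center of projection $E$ of codimension two corresponds to a pencil, i.e.\ to a pair of sections $s,s'\in\mathscr{L}(D)$ with $E=\{s=s'=0\}$ and $\pi_E=[s:s']$. The condition $E\cap X=\emptyset$ says that $s$ and $s'$ have no common zero, and by Lemma \ref{lem:interl} the projection $\pi_E$ is separating precisely when $s$ and $s'$ are interlacing. Thus the theorem is equivalent to the assertion that for $\deg D$ large the line bundle $\mathscr{O}_X(D)$ admits two interlacing sections. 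Since interlacing sections have only simple real zeros and their zero divisors are then forced to be disjoint (and totally real of full degree, so with no complex zeros), such a pair is the same as a pair of totally real, mutually interlacing effective divisors $P=\operatorname{div}(s)$ and $Q=\operatorname{div}(s')$, both lying in the fixed class $[D]\in\operatorname{Pic}^m(X)(\R)$.

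Next I would realize the class $[D]$ by a totally real divisor. Writing $X(\R)=X_1\sqcup\cdots\sqcup X_r$, a totally real effective divisor is a choice of $n_i$ points on each $X_i$ with $\sum_i n_i=m$, and the parities $n_i\bmod 2$ are forced by (and conversely label) the connected component of $\operatorname{Pic}^m(X)(\R)$ containing it; one matches the component of $[D]$ by choosing the $n_i$ with the correct parities, which is possible once $m$ is large. The key input is that for $m$ large the real Abel--Jacobi map from $\Sym^{n_1}(X_1)\times\cdots\times\Sym^{n_r}(X_r)$ to this component (a torus $(S^1)^g$) is surjective: its differential at a sufficiently general totally real divisor is onto, because a nonzero real holomorphic differential cannot vanish upon restriction to the totally real curve $X(\R)$; hence the map is a submersion, and as the source is compact and connected and the target connected, its image is open and closed, thus everything. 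This yields a generic $P$ with $[P]=[D]$ and every $n_i\ge 1$.

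The heart of the matter is to find the interlacing partner $Q$ in the \emph{same} class as $P$. With $P$ fixed, the interlacing divisors $Q$ range over the open box $U=\prod_{i,j}A_{i,j}$, where $A_{i,j}$ is the arc of $X_i$ between the consecutive points $p_{i,j}$ and $p_{i,j+1}$, and I must show that $\beta\colon U\to\operatorname{Pic}^0(X)(\R)$, $Q\mapsto[Q-P]$, hits $0$. Passing to the universal cover $V\cong\R^g$ of the relevant component, $\beta$ lifts to $\widetilde\beta(Q)=\sum_{i,j}c_{i,j}(q_{i,j})$, a Minkowski sum of the arcs $c_{i,j}(q)=\int_{p_{i,j}}^{q}\vec\omega$ joining $0$ to $w_{i,j}:=\int_{A_{i,j}}\vec\omega$. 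Here the dividing hypothesis enters decisively: if $X^{+}$ is one of the two halves of $X(\C)\setminus X(\R)$, then $X(\R)=\partial\overline{X^{+}}$ is null-homologous, so Stokes' theorem gives $\sum_{i,j}w_{i,j}=\int_{\sum_i X_i}\vec\omega=0$. A finite set of vectors summing to zero and spanning $\R^g$ automatically positively spans $\R^g$, so $0$ lies in the interior of their convex hull; for $m$ large the $w_{i,j}$ do span (again by independence of the real differentials on $X(\R)$) and each arc is short, so $\widetilde\beta$ is uniformly close to the sum-of-segments map whose image is a zonotope containing $0$ in its interior. A degree argument then shows that $0$ is an interior value of $\widetilde\beta$, producing an interlacing $Q\sim P\sim D$.

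The main obstacle is exactly this last step: guaranteeing that the interlacing partner lies in the same divisor class rather than merely in a nearby one. The two ingredients that make it work are the vanishing $\sum_{i,j}w_{i,j}=0$, which is precisely equivalent to $X$ being of dividing type, and the spanning of the $w_{i,j}$, which forces $\deg D$ to be large; combining them through the positive-spanning observation is what pins $0$ into the interior of the image. Finally, since all requirements only ask that each $n_i$ and $m$ exceed bounds depending on $g$ and $r$, a single threshold $k>2g$ works simultaneously for every class of every degree at least $k$, giving the uniform $k$ of the statement.
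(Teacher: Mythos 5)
Your strategy is genuinely different from the paper's, and its first two steps are sound: the reduction via Lemma \ref{lem:interl} to producing two interlacing sections of $\mathscr{L}(D)$ is correct, realizing $[D]$ by a totally real divisor $P$ with $n_i\geq 1$ points on each $X_i$ (parity matching plus surjectivity of the real Abel--Jacobi map) works for $\deg D$ large, and both the identity $\sum_{i,j}w_{i,j}=0$ coming from the dividing hypothesis and the spanning of the $w_{i,j}$ for $m>2g-2$ are correct. The gap is in the final ``degree argument.'' The point $0\in\R^g$ is automatically a \emph{boundary} value of $\widetilde\beta$: the corner of the parameter box where every $q_{i,j}$ degenerates to $p_{i,j}$ (i.e.\ $Q=P$) maps to $0$, as does the opposite corner. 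Hence the degree of $\widetilde\beta$ over $0$ is not defined, and a homotopy to the sum-of-segments map cannot show that $0$ is attained at an \emph{interior} point of the box, which is exactly what interlacing requires. The quantitative comparison is also too coarse: the deviation of $\widetilde\beta$ from the zonotope map is bounded only by $\sum_{i,j}\int_{A_{i,j}}|\vec\omega|=\int_{X(\R)}|\vec\omega|$, a constant of the same order as the inradius of the zonotope around $0$, so ``uniformly close'' needs the second-order smallness of each arc relative to its chord --- and even granting that, the boundary problem persists. What would actually close the gap is a local analysis at the corner $Q=P$: one needs the velocity vectors $\vec\omega(p_{i,j})$ (not merely the chords $w_{i,j}$) to positively span $\R^g$, which holds only when the points of $P$ are sufficiently equidistributed on $X(\R)$ --- an additional property that your Abel--Jacobi step does not supply and that must be arranged separately inside the fixed linear equivalence class.

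By contrast, the paper's proof avoids this global topological difficulty entirely: it starts from a separating morphism provided by Ahlfors' theorem, takes an interlacing pair $(s_0,s_1)$ with $D_0=(s_0)_0$ non-special, invokes Scheiderer's density theorem to write $D-D_0$ as a sum of distinct real points away from $\Supp D_0$, and then adjoins these points one at a time while preserving interlacing via Lemma \ref{lem:existsec} and Proposition \ref{prop:semigroupgen}. The one-point-at-a-time induction replaces your degree argument by a local perturbation that never passes through the degenerate configuration $Q=P$. If you wish to pursue your route, be aware that you are essentially reproving Ahlfors' existence theorem in high degree, and the corner analysis described above is the unavoidable hard step.
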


\begin{Def}\label{def:hyp}
 The \textit{hyperbolic  semigroup}
 $\textnormal{Hyp}(X)$ is the set of all elements of $\textnormal{Sep}(X)$ where the corresponding $f$ can be chosen to be the composition of a linear projection with an embedding of $X$ to some $\pp^n$, where the center of the projection is  disjoint from $X$. 
 \end{Def}

\begin{Bem}
 Replacing $\pp^n$ by $\pp^3$ in Definition \ref{def:hyp}  results in an equivalent condition \cite[\S 2]{kummer2015real}. Also, in the definition of $\textnormal{Hyp}(X)$, one could equivalently just  require  $f$ to be separating and $f^*\cO_{\pp^1}(1)$ to be very ample.
\end{Bem}

The set $\textnormal{Hyp}(X)$  also turns out to be a semigroup, see Proposition \ref{prop:semigroup}.
In Proposition \ref{prop:topo}, we  give an equivalent criterion for a curve to be hyperbolic
in terms of the linking numbers  of its components with the linear subspace from which we project. 

For a planar curve  $X$ of dividing type, a pencil of curves is said to be totally real with respect to $X$ if every curve in the pencil intersects $X$ in only real points. 
In \cite{touze2013totally},  Fiedler-Le Touz{\'e} asks  if for every planar curve of dividing type there exists a totally real pencil. 
Using our techniques we can answer this question in the affirmative even when the base points of the pencil are not contained in the curve. 
 In the next theorem, we let $\mathcal{V}$   denote the {subvariety} of $\mathbb{P}^2$ defined by a collection of homogeneous polynomials in $\R[x,y,z]$.

\begin{Thm}\label{thm:pencils}
 If $X\subset\pp^2$
  is a curve of dividing type, then there exists an integer $k$ such that for any $k' \geq k$ there are
  homogeneous polynomials 
  $f,g\in\R[x,y,z]$ of degree $k'$ for which  
  $\mathcal{V}(f,g)\cap X(\mathbb{R})=\emptyset$ 
  and such that $\mathcal{V}(\lambda f+\mu g)$ intersects $X$ only in real points for every $\lambda,\mu\in\R$ not both zero.
\end{Thm}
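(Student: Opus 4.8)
The plan is to reinterpret a pencil of degree-$k'$ plane curves as a pencil of hyperplanes in the ambient space of the complete embedding of $X$ by $\cO_X(k'):=\cO_{\pp^2}(k')|_X$, and then to obtain the desired separating projection from Theorem~\ref{thm:alwayshyperbolic}. Write $d=\deg X$ and let $H$ denote the hyperplane class on $X$, so that $L:=\cO_X(k')=\cO_X(k'H)$ has degree $k'd$. Up to scalar, the sections of $L$ are the hyperplanes of $\pp(\mathscr{L}(k'H)^\vee)$, and a codimension-two linear subspace $E$ is the common zero locus on $X$ of the two sections $s,s'$ spanning the corresponding pencil of hyperplanes. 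The crucial observation is that every section of $L$ is the restriction of a homogeneous form of degree $k'$, so that pencils of hyperplanes correspond precisely to pencils $\{\lambda f+\mu g\}$ of plane curves of degree $k'$.

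First I would record this lifting property. The twisted ideal-sheaf sequence
\[
0\longrightarrow \cO_{\pp^2}(k'-d)\longrightarrow \cO_{\pp^2}(k')\longrightarrow L\longrightarrow 0
\]
yields a restriction map $H^0(\pp^2,\cO_{\pp^2}(k'))\to H^0(X,L)$ whose cokernel embeds into $H^1(\pp^2,\cO_{\pp^2}(k'-d))$. Since $H^1(\pp^2,\cO_{\pp^2}(m))=0$ for every $m\in\Z$, this restriction map is surjective. Hence the complete linear system $|k'H|$ is cut out on $X$ by plane curves of degree $k'$: every $s\in H^0(X,L)$ lifts to a homogeneous $f\in\R[x,y,z]$ of degree $k'$ with $f|_X=s$, and every pencil of hyperplanes of $\pp(\mathscr{L}(k'H)^\vee)$ lifts to a pencil $\{\lambda f+\mu g\}$ of degree-$k'$ plane curves.

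Next I would choose $k'$ so large that $k'd\geq k_0$, where $k_0>2g$ is the integer supplied by Theorem~\ref{thm:alwayshyperbolic}; then $k'd>2g$, so $L$ is very ample and the complete map $X\hookrightarrow\pp(\mathscr{L}(k'H)^\vee)$ is an embedding to which the theorem applies with $D=k'H$. By Theorem~\ref{thm:alwayshyperbolic} this embedding is hyperbolic, so there is a codimension-two subspace $E$ with $E\cap X=\emptyset$ for which $\pi_E\colon X\to\pp^1$ is separating. Picking sections $s,s'$ of $L$ spanning the pencil of hyperplanes through $E$ and lifting them to degree-$k'$ forms $f,g$ as above, the condition $E\cap X=\emptyset$ becomes $\mathcal{V}(f,g)\cap X=\emptyset$, and in particular $\mathcal{V}(f,g)\cap X(\R)=\emptyset$. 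For the totally real condition, observe that $x\in X$ lies on $\mathcal{V}(\lambda f+\mu g)$ precisely when $\lambda s(x)+\mu s'(x)=0$, which by the definition of the projection means $\pi_E(x)=[\lambda:\mu]$. Thus for $[\lambda:\mu]\in\pp^1(\R)$ we have $\pi_E(x)\in\pp^1(\R)$, and since $\pi_E$ is separating, $\pi_E^{-1}(\pp^1(\R))=X(\R)$ forces $x\in X(\R)$. Hence every member $\mathcal{V}(\lambda f+\mu g)$ with $[\lambda:\mu]\in\pp^1(\R)$ meets $X$ only in real points, which is the claim, with the integer of the statement taken to be $\lceil k_0/d\rceil$.

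The substantive input is carried entirely by Theorem~\ref{thm:alwayshyperbolic} (itself resting on the interlacing-section machinery behind Lemma~\ref{lem:interl}); the remaining steps merely translate between pencils of plane curves and centers of linear projection. The one point demanding care, which I expect to be the only real obstacle, is ensuring that the complete embedding by $|k'H|$ is cut out by plane curves of degree $k'$ rather than by forms pulled back from some higher-dimensional ambient space, and this is exactly what surjectivity of the restriction map guarantees.
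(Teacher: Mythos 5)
Your proposal is correct and follows exactly the paper's route: the paper's proof is the one-line "apply Theorem~\ref{thm:alwayshyperbolic} to the line bundle $\mathcal{O}_X(k)$," and your argument simply supplies the details that justify this, namely the surjectivity of $H^0(\pp^2,\cO_{\pp^2}(k'))\to H^0(X,\cO_X(k'))$ via the ideal-sheaf sequence and the vanishing of $H^1(\pp^2,\cO_{\pp^2}(m))$, so that the hyperbolic pencil of hyperplanes lifts to a pencil of degree-$k'$ plane curves with base locus disjoint from $X$. The translation between the separating projection $\pi_E$ and total reality of the pencil is also exactly as intended.
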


A curve $X$ of genus $g$ is called an \textit{$M$-curve} if $X(\R)$ has $r=g+1$ connected components. Every $M$-curve is of dividing type.  In the case of $M$-curves we give a complete description of both the separating and hyperbolic semigroups.

\begin{Thm}\label{thm:mcurves}
 Let $X$ be an $M$-curve.
 \begin{enumerate}[a)]
  \item If $g=0$, then $\textnormal{Hyp}(X)=\textnormal{Sep}(X)=\N$.
  \item If $g=1$, then $\textnormal{Sep}(X)=\N^2$ and $\textnormal{Hyp}(X)=\N^2\smallsetminus\{(1,1)\}$.
  \item If $g>1$, then $\textnormal{Sep}(X)=\N^{g+1}$ and $\textnormal{Hyp}(X)=\{d\in{\N^{g+1}}: \, \sum_{i=1}^{g+1} d_i \geq g+3\}$.
 \end{enumerate}
\end{Thm}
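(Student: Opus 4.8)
The plan is to determine each semigroup by establishing the trivial containment, realizing the minimal elements, and then propagating by interlacing sections. I treat the separating and the hyperbolic cases in turn.

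\textbf{Reductions and the case $g=0$.} For either semigroup the inclusion into $\N^{g+1}$ is immediate, since a separating morphism restricts on each oval $X_i$ to a covering of $\pp^1(\R)$ and hence $d_i(f)\ge 1$. When $g=0$ one has $X\cong\pp^1$, and for each $d\ge 1$ the map $z\mapsto z^d$ in real coordinates is separating and arises as a projection of the degree-$d$ rational normal curve; thus $\textnormal{Sep}(X)=\textnormal{Hyp}(X)=\N$. From now on I assume $g\ge 1$, so $X$ has $r=g+1$ ovals $X_1,\dots,X_{g+1}$.

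\textbf{The separating semigroup.} First, $(1,\dots,1)\in\textnormal{Sep}(X)$: Gabard's bound supplies a separating morphism of degree at most $(g+r+1)/2=g+1$, and since the degree is at least $r=g+1$ it equals $g+1$ and distributes as a single sheet over each oval. For an arbitrary $d\in\N^{g+1}$ I would realize $d$ directly: choose a totally real effective divisor $D$ with exactly $d_i$ simple points on each $X_i$, take $s$ with $\operatorname{div}(s)=D$ as a candidate for $f^{-1}(0)$, and by Lemma \ref{lem:interl} it suffices to exhibit a section $s'$ of $\cO(D)$ interlacing with $s$. Here the maximality of $X$ is essential: the two halves $X^\pm$ are planar, so the real sections can be controlled oval by oval, and the interlacing partner is produced by perturbing the minimal pencil $(s_0,s_0')$ and multiplying by real functions with prescribed zeros on the chosen ovals. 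For $|d|$ large this is already guaranteed by Corollary \ref{cor:orthants}, which gives $d+\Z_{\ge 0}^{g+1}\subseteq\textnormal{Sep}(X)$; I emphasize that the semigroup property (Proposition \ref{prop:semigroup}) cannot on its own supply the small elements, since e.g.\ $(1,\dots,1)$ is indecomposable in $\N^{g+1}$. The technical heart is therefore the existence of interlacing partners in the low-degree regime, which holds precisely because $X$ is maximal.

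\textbf{The hyperbolic semigroup: the lower bound.} By the Remark, $d\in\textnormal{Hyp}(X)$ exactly when some separating $f$ realizing $d$ has $L:=f^{*}\cO_{\pp^1}(1)$ very ample; note $\deg L=|d|$ because $f^{-1}(0)\subseteq X(\R)$ is totally real. A very ample $L$ embeds $X$ in $\pp^{\,h^0(L)-1}$, and a curve of genus $g\ge 1$ cannot lie in $\pp^1$, forcing $h^0(L)\ge 3$. If $h^0(L)=3$ the image is a smooth plane curve of degree $|d|$, of genus $\binom{|d|-1}{2}$, which must equal $g$: for $g=1$ this is solved only by $|d|=3$ (the plane cubic) and excludes $(1,1)$, giving $\textnormal{Hyp}(X)=\N^2\smallsetminus\{(1,1)\}$. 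For $g>1$ the essential point is the following \emph{main obstacle}: one must show that on an $M$-curve a separating line bundle of degree at most $g+2$ is never very ample. A Clifford estimate controls the special bundles only for small $g$, so I expect to rule out the remaining (planar and special) very ample models by the topological criterion of Proposition \ref{prop:topo}, showing via the linking numbers $\lk(X_i,E)$ that no codimension-two center $E$ makes such a low-degree model real-fibered. Granting this, for $g>1$ one has $h^0(L)\ge 4$, whence $|d|=h^0(L)+g-1-h^1(L)$ is minimized by the non-special configuration $h^0(L)=4$, $h^1(L)=0$, a nondegenerate space curve of degree $g+3$ in $\pp^3$; thus $|d|\ge g+3$.

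\textbf{The hyperbolic semigroup: the upper bound.} It remains to realize every $d$ with $|d|\ge g+3$ (resp.\ $|d|\ge 3$ for $g=1$). Theorem \ref{thm:alwayshyperbolic} yields hyperbolicity in large degree and $\textnormal{Hyp}(X)$ is a semigroup, but since addition strictly increases $|d|$ these cannot by themselves fill the interval down to $g+3$; so I argue constructively, starting from the separating divisor $D$ of degree $|d|$ above and using the freedom in placing its $d_i$ real points to force $\cO(D)$ very ample. Very ampleness fails only on a proper locus (a pair of points failing to impose independent conditions), and once $|d|\ge g+3$ there is enough room to avoid it while retaining an interlacing partner. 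The genuinely hard part, mirroring the lower bound, is to maintain the very-ample and the interlacing conditions \emph{simultaneously} at the sharp degree $g+3$; I would handle this by exhibiting explicit degree-$(g+3)$ space models in $\pp^3$ refining the minimal separating pencil, transporting the construction across all admissible distributions with $|d|=g+3$, and then invoking Proposition \ref{prop:semigroup} and Corollary \ref{cor:orthants} to cover all larger $|d|$.
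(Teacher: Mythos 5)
Your skeleton matches the paper's, but at each of the three places you yourself flag as ``the technical heart'', ``the main obstacle'' and ``the genuinely hard part'', the argument is not actually supplied, and in each case there is a concrete missing ingredient. For $\textnormal{Sep}(X)=\N^{g+1}$: the ``direct realization'' of an arbitrary $d$ by perturbing the minimal pencil and multiplying by real functions with prescribed zeros is not a proof (multiplying by a function changes the line bundle and hence the degree partition, and no interlacing partner is actually produced). The missing input is Huisman's theorem that a divisor $P_1+\dots+P_{g+1}$ with one point on each oval of an $M$-curve is non-special; the paper then iterates Proposition \ref{prop:semigroupgen} (via Remark \ref{rem:special}) starting from $(1,\dots,1)$, adding points on whichever ovals one likes while preserving interlacing. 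Alternatively, you could have noticed that Corollary \ref{cor:orthants}, which you dismiss as applying only ``for $|d|$ large'', already applies to $d=(1,\dots,1)$: there $|d|+l=2(g+1)\geq 2g-1$, so a single application gives $(1,\dots,1)+\Z_{\geq 0}^{g+1}=\N^{g+1}\subseteq\textnormal{Sep}(X)$ and your unexecuted construction is unnecessary.

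For the hyperbolic lower bound with $g>1$ you write ``Granting this''; the grant is exactly the content that has to be proved. The observation that makes it tractable is that on an $M$-curve $|d|\geq r=g+1$, so the only partitions with $|d|\leq g+2$ are $(1,\dots,1)$ and permutations of $(2,1,\dots,1)$; these are excluded by Lemmas \ref{lem:rat111} and \ref{lem:not211} respectively (the latter is precisely the linking-number argument you anticipate, but it is already available in the paper and you neither cite nor reprove it). Finally, for the upper bound at the sharp degree $g+3$ the paper does not need ``explicit degree-$(g+3)$ space models'': by Halphen's theorem the non--very-ample divisor classes of degree at least $g+3$ form a locus of codimension at least one, while the construction behind Proposition \ref{prop:semigroupgen} leaves open sets of freedom in placing the added points, so the divisor realizing $d$ can simultaneously be taken very ample and interlacing-realizable. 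Your genericity intuition is right, but without Halphen (or an equivalent) the assertion that very ampleness fails only on a proper locus is unsupported at degree $g+3$, which lies below the uniform bound $2g+1$ where very ampleness is automatic. The $g=1$ case is handled correctly and a little more directly than in the paper, which instead invokes Lemma \ref{lem:rat111} and Example \ref{exp:onlyelliptic}.
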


Finally, in  Section \ref{sec:locus} we study the hyperbolicity locus of an embedded curve in an example.
Given an embedded  curve $X$ in $\mathbb{P}^n$,  Shamovich and  Vinnikov  asked if  the subset of the Grassmannian $\textnormal{Gr}(n-1, n+1)$ corresponding to the  linear spaces from which the projection of $X$ is separating is connected \cite{Sha14}. In Example \ref{exp:connected}, using  the  hyperbolic semigroup we construct an example where the answer is negative. {In fact, this example is a member of a family of curves constructed independently by  Mikhalkin and Orevkov \cite[Theorem 3]{MikOre}. Their construction immediately implies that there exists an $M$-curve in $\mathbb{P}^3$ of genus $g$ such that its hyperbolicity locus consists of $g+1$ connected components.}

\section{The separating and hyperbolic semigroups}

We begin by showing that the sets $\textnormal{Sep}(X)$ and $\textnormal{Hyp}(X)$ are indeed semigroups.

\begin{Prop}\label{prop:semigroup}
Let $X$ be a curve of dividing type.  Then both $\textnormal{Sep}(X)$ and $\textnormal{Hyp}(X)$ are  semigroups.
\end{Prop}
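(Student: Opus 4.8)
The plan is to prove closedness under componentwise addition (nonemptiness of $\textnormal{Sep}(X)$ is already noted, and $\textnormal{Hyp}(X)\subseteq\textnormal{Sep}(X)$). Let $d,d'\in\textnormal{Sep}(X)$ be realized by separating morphisms $f,g\colon X\to\pp^1$. Since $X$ is of dividing type, write $X(\C)\smallsetminus X(\R)=X^+\sqcup X^-$ with $X^-$ the complex conjugate of $X^+$, and $\pp^1(\C)\smallsetminus\pp^1(\R)=\cH^+\sqcup\cH^-$ for the two half-planes (in the affine coordinate $t=x_1/x_0$, these are $\{\Ima t>0\}$ and $\{\Ima t<0\}$). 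Because $f^{-1}(\pp^1(\R))=X(\R)$ and $X^+$ is connected, $f(X^+)$ lies in a single half-plane $\cH^{\epsilon(f)}$, and by conjugation $f(X^-)\subseteq\cH^{-\epsilon(f)}$; likewise for $g$. The point is that this sign $\epsilon(f)\in\{+,-\}$ is \emph{global}, the same on every component $X_i$.

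The key tool I would introduce is the rational map $\mu\colon\pp^1\times\pp^1\dashrightarrow\pp^1$, $\mu([x_0:x_1],[y_0:y_1])=[x_0y_0-x_1y_1:x_1y_0+x_0y_1]$, i.e.\ the tangent–addition law: under $s=\tan\theta$ one has $\mu(\tan\alpha,\tan\beta)=\tan(\alpha+\beta)$, so on real points $\mu$ adds angles, hence winding numbers. Its base locus consists of the two complex points $([1:i],[1:-i])$ and $([1:-i],[1:i])$. I would first record its half-plane behaviour by the direct computation
\[
\Ima\frac{z+w}{1-zw}=\frac{\Ima z\,(1+|w|^2)+\Ima w\,(1+|z|^2)}{|1-zw|^2},
\]
which is strictly positive when $\Ima z>0$ and $\Ima w>0$ (and $1-zw\neq0$ there, since $zw=1$ forces $w=1/z\in\cH^-$); thus $\mu(\cH^+\times\cH^+)\subseteq\cH^+$ and, by conjugation, $\mu(\cH^-\times\cH^-)\subseteq\cH^-$. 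Next I reduce to the case $\epsilon(f)=\epsilon(g)$: if they differ, replace $g$ by $\iota\circ g$, where $\iota\colon[y_0:y_1]\mapsto[y_1:y_0]$ is the real automorphism $w\mapsto 1/w$. This interchanges $\cH^+$ and $\cH^-$, so it flips $\epsilon(g)$, while keeping $g$ separating, preserving each covering degree $d_i(g)$, and preserving $g^{*}\cO_{\pp^1}(1)$ up to isomorphism.

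Now set $h=\mu\circ(f,g)$. Assuming $\epsilon(f)=\epsilon(g)=+$, the base point $([1:i],[1:-i])$ would require $p\in f^{-1}(i)\cap g^{-1}(-i)\subseteq X^+\cap X^-=\emptyset$ (and similarly for its conjugate), so $(f,g)(X)$ misses the base locus of $\mu$. Hence $h$ is a genuine morphism, defined by the two sections $s_0t_0-s_1t_1,\ s_1t_0+s_0t_1\in H^0(L\otimes M)$ (with $s_i\in H^0(L)$, $t_i\in H^0(M)$, $L=f^{*}\cO_{\pp^1}(1)$, $M=g^{*}\cO_{\pp^1}(1)$) which have no common zero, giving $h^{*}\cO_{\pp^1}(1)=L\otimes M$. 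For $p\in X^+$ we get $f(p),g(p)\in\cH^+$, so $h(p)\in\cH^+$, whence $h(X^+)\cap\pp^1(\R)=\emptyset$ and likewise for $X^-$; therefore $h^{-1}(\pp^1(\R))=X(\R)$ and $h$ is separating. As $h$ is then unramified over $X(\R)$, the degree $d_i(h)$ is the winding number of $h|_{X_i}$, which by the addition law equals $d_i(f)+d_i(g)$ since these have equal sign; thus $d(h)=d+d'$ and $\textnormal{Sep}(X)$ is a semigroup. Finally, if $f,g\in\textnormal{Hyp}(X)$ then by the Remark $L,M$ are very ample, so $L\otimes M=h^{*}\cO_{\pp^1}(1)$ is very ample, and $h\in\textnormal{Hyp}(X)$ by the same Remark. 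The main obstacle is precisely the orientation bookkeeping—aligning the global signs $\epsilon(f),\epsilon(g)$ so the winding numbers add rather than subtract—together with verifying that $\mu$ preserves the half-planes; once this is secured, the same sign condition automatically forces $(f,g)(X)$ off the base locus and pins down $h^{*}\cO_{\pp^1}(1)=L\otimes M$, which is what makes the argument work simultaneously for $\textnormal{Sep}$ and $\textnormal{Hyp}$.
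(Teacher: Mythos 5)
Your proof is correct and follows essentially the same route as the paper: the paper conjugates multiplication on the unit circle by a M\"obius transformation $\phi$ and sets $f=\phi\circ\bigl(\phi^{-1}(f_1)\cdot\phi^{-1}(f_2)\bigr)$, which is exactly your tangent-addition map $\mu$ written in the circle model, with the same half-plane-preservation and winding-number-addition argument and the same conclusion $h^{*}\cO_{\pp^1}(1)=\cL_1\otimes\cL_2$ for $\textnormal{Hyp}(X)$. Your version merely spells out two points the paper leaves implicit, namely the check that the image avoids the base locus of $\mu$ and the justification of the ``without loss of generality'' alignment of the two half-plane signs via $w\mapsto 1/w$.
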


\begin{proof}
 Let $f_1,f_2\colon X\to\pp^1$ be two separating morphisms.
 Let $X_+$ be one of the connected components of $X(\C)\smallsetminus X(\R)$. Without loss of generality we may assume that $f_1(X_+)=f_2(X_+)=H_+$ is the upper half-plane. 
 Identify $\mathbb{P}^1(\C)$ with $\C \cup \{\infty\}$ and let $\phi \colon \mathbb{P}^1(\C) \to  \mathbb{P}^1(\C) $ be a M\"obius transformation sending the circle $|z|=1$ to $\mathbb{P}^1(\R)$.  Define the map $g \colon X\to\pp^1(\mathbb{C})$  by  $$g(x) = \phi^{-1}(f_1(x))\cdot\phi^{-1}(f_2(x)).$$
The preimage $g^{-1}(z)$ of  any point $z$ with $|z|=1$  is contained in   $\mathbb{P}^1(\R)$. Moreover, this
preimage consists of exactly $d_i(f_1)+d_i(f_2)$ points on $X_i$ for $i = 1, \dots, r$. 
Then the composition $f = \phi\circ g(x)$ is a separating map   which satisfies $d(f)=d(f_1)+d(f_2)$. This proves that $ \textnormal{Sep}(X)$ is a semigroup.

To show that $\textnormal{Hyp}(X)$ is a semigroup, suppose that $\cL_1=f_1^*\cO_{\pp^1}(1)$ and $\cL_2=f_2^*\cO_{\pp^1}(1)$ are both very ample. Then the line bundle $f^*\cO_{\pp^1}(1)=\cL_1\otimes\cL_2$, where $f$ is defined as above, is also very ample. Therefore $\textnormal{Hyp}(X)$ is also a semigroup. 
\end{proof}

When $X = \pp^1$ we have $1 \in \textnormal{Hyp}(\pp^1)$. Therefore,  the next corollary is an immediate consequence of Proposition \ref{prop:semigroup}.

\begin{Kor}\label{cor:p1}
 We have $\textnormal{Sep}(\pp^1)=\textnormal{Hyp}(\pp^1)=\N$. 
\end{Kor}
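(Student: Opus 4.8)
The plan is to read this off from Proposition \ref{prop:semigroup} together with a single base case. First I would observe that $\pp^1(\R)$ is a single circle, so here $r=1$ and both $\textnormal{Sep}(\pp^1)$ and $\textnormal{Hyp}(\pp^1)$ are subsets of $\N^1=\N$. The trivial inclusions $\textnormal{Hyp}(\pp^1)\subseteq\textnormal{Sep}(\pp^1)$ (immediate from Definition \ref{def:hyp}) and $\textnormal{Sep}(\pp^1)\subseteq\N$ (since the degree of a separating morphism is at least $r=1$) give the chain $\textnormal{Hyp}(\pp^1)\subseteq\textnormal{Sep}(\pp^1)\subseteq\N$. Thus it suffices to prove the reverse inclusion $\N\subseteq\textnormal{Hyp}(\pp^1)$.

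The key step is to exhibit $1\in\textnormal{Hyp}(\pp^1)$. The identity map $\mathrm{id}\colon\pp^1\to\pp^1$ is separating, since $\mathrm{id}^{-1}(\pp^1(\R))=\pp^1(\R)$, and it satisfies $d(\mathrm{id})=1$. To see that it realizes hyperbolicity, I would invoke the equivalent formulation in the Remark following Definition \ref{def:hyp}: a separating morphism $f$ lies in $\textnormal{Hyp}(X)$ as soon as $f^*\cO_{\pp^1}(1)$ is very ample. Here $\mathrm{id}^*\cO_{\pp^1}(1)=\cO_{\pp^1}(1)$ is very ample, so $1=d(\mathrm{id})\in\textnormal{Hyp}(\pp^1)$. (Equivalently, one may view $\mathrm{id}$ as the linear projection from the codimension-two linear subspace of $\pp^1$, which is empty and hence trivially disjoint from $X$, composed with the identity embedding.)

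Finally I would apply Proposition \ref{prop:semigroup}: since $\textnormal{Hyp}(\pp^1)$ is a semigroup under componentwise addition and contains $1$, it contains $n=1+\cdots+1$ for every $n\in\N$. Hence $\N\subseteq\textnormal{Hyp}(\pp^1)$, and combining with the inclusions above forces $\textnormal{Hyp}(\pp^1)=\textnormal{Sep}(\pp^1)=\N$.

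There is essentially no obstacle here; the statement really is an immediate consequence of the semigroup property. The only point requiring a moment's care is the base case $1\in\textnormal{Hyp}(\pp^1)$, namely checking that the identity morphism legitimately counts as hyperbolic. This is handled cleanly by the very-ampleness reformulation in the Remark, which sidesteps the degenerate interpretation of projecting from an empty center.
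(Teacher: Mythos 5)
Your argument is correct and is essentially the paper's own proof: the authors likewise note that $1\in\textnormal{Hyp}(\pp^1)$ and then invoke the semigroup property from Proposition \ref{prop:semigroup}. You merely spell out the base case more explicitly (via the identity map and the very-ampleness reformulation), which the paper leaves as an assertion.
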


\begin{Bsp}\label{exp:genus0}
In fact, we have $\textnormal{Sep}(X)=\N$ if any only if $X =\pp^1$, 
{since a map of degree $1$ is an isomorphism.}
\end{Bsp}

\begin{Bem}
 For a dividing curve $X$ of genus $g$, every $d\in \textnormal{Sep}(X)$ with $|d|\geq2g+1$ is also in $\textnormal{Hyp}(X)$. This is because every line bundle $\mathscr{L}$ on $X$ with $\deg(\mathscr{L}) \geq 2g+1$ is very ample \cite[Corollary 3.2 b)]{Hart77}.
\end{Bem}

\begin{Bsp}\label{exp:genus2}
 Let $X$ be a hyperelliptic curve of genus $g=2$ given by $x_2^2=p(x_0,x_1)$  where $p$ is a positive definite form of degree six. We consider $X$ to be a subvariety of  the weighted projective space $\pp^2(1,1,3)$, so that it is non-singular. Then the canonical map is separating. Since $r+g$ must be odd and $f$ is unramified over the real points, we conclude that $r=1$. Thus, we have $2\in \textnormal{Sep}(X)$. By \cite[\S 4.2]{Ahl50} we also have $3\in \textnormal{Sep}(X)$ but we do not have $1\in \textnormal{Sep}(X)$ because the curve is not rational. Thus, we have $\textnormal{Sep}(X)=\N_{\geq2}$.
\end{Bsp}

\begin{Bem}
In general,  the separating and hyperbolic semigroups  do not only depend on $r$ and $g$. For example, it is possible to  construct a hyperelliptic curve of genus three whose canonical map is separating. In that case we have $r=2$. But there are also separating curves of genus three with $r=2$ that are not hyperelliptic and therefore do not admit a separating morphism of degree two.
\end{Bem}

\begin{Bsp}\label{exp:onlyelliptic}
Let $g=1$ and suppose that $r =2$.
Then there is an automorphism of $X$ that sends $X_1$ to $X_2$. Thus, $\textnormal{Sep}(X)$ is stable under the action of the symmetric group $\mathfrak{S}_2$. From {embeddings} to $\pp^2$ we obtain $(2,1),(1,2)\in \textnormal{Hyp}(X)$. One also has $(1,1)\in \textnormal{Sep}(X)\smallsetminus\textnormal{Hyp}(X)$.
\end{Bsp}

\begin{Bsp}\label{exp:plane} 
In this example we show that the separating semigroup of a planar hyperbolic curve $X$ of degree $k$ is not symmetric for  $k \geq 4$. This version of the argument was suggested by Erwan Brugall\'e following our original approach for $k=4$. The number of connected components of $X(\mathbb{R})$ is  $r = \lceil \frac{k}{2} \rceil $.  
 A linear system of rank $2$ on a curve $X$  of genus $g \geq 3$ is unique, \cite[\S 2.3]{namba} or \cite[A.18]{arbarello2013geometry}.  So we can label the connected components 
 $X_1, \dots, X_r$ from the innermost oval $X_1$ to the outermost oval $X_r$ if $k$ is even. If $k$ is odd then $X_{r-1}$ is the outermost and $X_{r}$ is the unique pseudoline.  By our hyperbolicity assumption $(2, 2, \dots, 2) \in \textnormal{Hyp}(X) \subset \textnormal{Sep}(X)$ if $k$ is even and  $(2, 2, \dots, 2,  1) \in \textnormal{Hyp}(X) \subset \textnormal{Sep}(X)$ if $k$ is odd.  
 
 The gonality of a planar curve $X$ of degree $k$ is $k-1$ and moreover every map $f\colon  X \to \mathbb{P}^1$ of degree $k-1$ is induced by a projection $\mathbb{P}^2 \to \mathbb{P}^1$ whose center is a point on $X$,  \cite[\S 2.3]{namba} or \cite[A.18]{arbarello2013geometry}.  
Therefore, we have  $(1, 2, \dots, 2) \in \textnormal{Sep}(X)$ if $k$ is even
and, if $k$ is odd  $(1, 2, \dots, 2, 1) \in \textnormal{Sep}(X)$. However, no other permutation of these degree sequences {is} possible, since a projection whose center is not on the innermost oval of $X$ would not be a   separating morphism.

Therefore, the semigroup $\textnormal{Sep}(X)$ is in general not 
{preserved under the action of the symmetric group.}
Moreover, it follows  that the  semigroup $\textnormal{Sep}(X)$ is an invariant of the curve $X$  which is capable of distinguishing which connected component of $X(\mathbb{R})$ is the innermost oval of a hyperbolic embedding to {$\mathbb{P}^2$} when $g \geq 3$. 
\end{Bsp}

\subsection{Separating morphisms and interlacing sections}
The following definition generalizes the interlacing property for polynomials \cite{fisk06} to sections of line bundles.

\begin{Def}
 Let $\mathscr{L}$ be a line bundle on $X$. Let $s_0,s_1\in\Gamma(X,\mathscr{L})$ be two global sections that both have only simple and real zeros. We say that $s_0$ and $s_1$ \textit{interlace} if each connected component of $X(\R)\smallsetminus\{P \ | \, s_0(P)=0\}$ contains exactly one zero of $s_1$ and vice versa.
\end{Def}

\begin{Lem}\label{lem:interl}
 Let $\mathscr{L}$ be a line bundle on $X$ and let $s_0,s_1\in\Gamma(X,\mathscr{L})$ be global sections that generate $\mathscr{L}$  and that both have only simple and real zeros.    Then the morphism $X\to\pp^1$ given by $x\mapsto (s_0(x):s_1(x))$ is separating if and only if $s_0$ and $s_1$ are interlacing.
\end{Lem}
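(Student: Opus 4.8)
The plan is to reduce the statement to a real-rootedness property of the pencil spanned by $s_0$ and $s_1$, and then to establish that property by an intermediate value argument carried out on each real component.

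First I would reformulate separating-ness fiberwise. As the sections are real, $f$ is defined over $\R$, so $f(X(\R))\subseteq\pp^1(\R)$ and the inclusion $X(\R)\subseteq f^{-1}(\pp^1(\R))$ always holds; thus $f$ is separating if and only if every fiber of $f$ over a real point is totally real. The fiber over $(\alpha:\beta)\in\pp^1(\R)$ is the zero set of the section $\beta s_0-\alpha s_1$, and since $s_0,s_1$ have no common zeros, as $(\alpha:\beta)$ ranges over $\pp^1(\R)$ these sections range exactly over the nonzero real pencil spanned by $s_0$ and $s_1$. Hence $f$ is separating if and only if every nonzero real section $a\,s_0+b\,s_1$ has all of its $n:=\deg\mathscr{L}$ zeros lying on $X(\R)$.

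For the implication that interlacing implies separating, I would fix the section $s_c:=s_1-c\,s_0$ for $c\in\R\smallsetminus\{0\}$ (the cases $c=0$ and $c=\infty$ are just $s_1$ and $s_0$, whose zeros are real by hypothesis) and bound its number of real zeros from below. On a component $X_i$ the ratio $h:=s_1/s_0\colon X_i\to\pp^1(\R)$ is well defined, with simple zeros at the $d_i$ zeros of $s_1$ and simple poles at the $d_i$ zeros of $s_0$, and by interlacing these $2d_i$ points alternate along the circle $X_i$. The $d_i$ zeros of $s_1$ cut $X_i$ into $d_i$ arcs, each containing exactly one pole of $h$; since the zeros are simple, $h$ changes sign at every zero and at every pole, so on each arc $h$ runs from $0$ out to $\pm\infty$ at the enclosed pole (covering all values of one sign) and back from $\mp\infty$ to $0$ (covering all values of the other sign). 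By the intermediate value theorem the equation $h=c$ therefore has at least one solution on each arc, giving at least $d_i$ zeros of $s_c$ on $X_i$ and hence at least $\sum_i d_i=n$ real zeros in total. As a section of $\mathscr{L}$ has exactly $n$ zeros counted with multiplicity, all zeros of $s_c$ are real. Thus every section of the real pencil is totally real, and by the reformulation above $f$ is separating.

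For the converse, suppose $f$ is separating. By \cite[Theorem 2.19]{kummer2015real} it is unramified over $X(\R)$, so each restriction $f|_{X_i}\colon X_i\to\pp^1(\R)$ is a covering map of circles, hence monotone. The two points $0,\infty\in\pp^1(\R)$ divide the target into two arcs, and monotonicity forces the preimages of $0$ and of $\infty$ to occur alternately along $X_i$. Since these preimages are precisely the zeros of $s_1$ and of $s_0$ on $X_i$, the two zero sets alternate on every component, which is exactly the interlacing condition. The main obstacle I expect is the intermediate value argument in the first direction: one must track the signs of $h$ around each oval (using that all zeros are simple, so $h$ changes sign at each zero and pole) so as to guarantee that every arc contributes a solution of $h=c$ and that the count reaches the full degree $n$; the converse is comparatively soft once unramifiedness over $X(\R)$ is invoked.
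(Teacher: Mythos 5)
Your proof is correct, and the converse direction coincides with the paper's (both reduce it to the unramifiedness of separating morphisms over $X(\R)$ from \cite[Theorem 2.19]{kummer2015real}, which turns each $f|_{X_i}$ into a covering of circles under which the fibres of $(0:1)$ and $(1:0)$ must alternate). The forward direction, however, is argued differently. The paper reasons by contradiction through a degeneration: if $f$ were not separating, continuity of the zeros of the pencil $s_0+\lambda s_1$ would force some real $\lambda$ at which a double real zero appears, and this is excluded because each zero is trapped in its arc of $X(\R)\smallsetminus\{s_1=0\}$ (it cannot cross a zero of $s_1$, where the pencil never vanishes). You instead prove real-rootedness of every member of the pencil directly: writing $h=s_1/s_0$ on each oval, the alternation of its simple zeros and poles lets the intermediate value theorem produce one solution of $h=c$ in each of the $d_i$ arcs, and the count $\sum_i d_i=\deg\mathscr{L}$ shows these exhaust all zeros. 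This is the classical ``interlacing implies real-rootedness'' argument transported from polynomials to sections, and it is arguably more self-contained, since the paper's step ``not separating implies some member has a double real zero'' itself rests on an implicit continuity-of-roots argument that you avoid; the paper's version, on the other hand, is shorter and is reused almost verbatim in Proposition \ref{prop:interl}, where only some components carry the full interlacing hypothesis and a global root count of your kind would not be available. One small point you leave implicit: the interlacing hypothesis forces $s_0$ and $s_1$ to have the same, strictly positive, number of zeros on every component $X_i$ (a component avoided by one section would violate the ``exactly one zero'' condition for the other), which is what guarantees the arc-and-pole structure your intermediate value argument needs on each oval.
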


\begin{proof}
 Assume that $s_0$ and $s_1$ interlace. If the map is not separating, then there is a $\lambda \in \mathbb{R}$ such that $s_0 + \lambda s_1$ has a double zero on $X(\mathbb{R})$. Since $s_0$ and $s_1$ generate $\mathscr{L}$, the section $s_0 + \lambda s_1$ does not vanish on any zero of $s_1$ for any $\lambda$. Thus, because $s_0$ has exactly one zero on each connected component of $X(\mathbb{R}) \smallsetminus \{P \ | \, s_1(P)=0\}$, a double root of $s_0 + \lambda s_1$ is impossible. Therefore, interlacing implies separating.
 
 Conversely, assume that the morphism under consideration is separating. It follows immediately that all zeros of $s_0$ and $s_1$ are real. The other properties of interlacing sections follow from the fact that the restriction of separating morphisms to the real part is unramified \cite[Theorem 2.19]{kummer2015real}.
\end{proof}

To verify the interlacing property of a pair of sections we have the following sufficient criterion which we will use later on.

\begin{Prop}\label{prop:interl}
 Let $\mathscr{L}$ be a line bundle on $X$ and let $s_0,s_1\in\Gamma(X,\mathscr{L})$ be global sections that generate $\mathscr{L}$. Let $s_0$ have only simple  and real zeros. Let $I$ be the set of indices $i$ such that $s_0$ has more than one zero on $X_i$. Assume that for each $i\in I$ there is exactly one zero of $s_1$ on each connected component of $X_i \smallsetminus \{P\ | \ s_0(P)=0\}$. Then $s_0$ and $s_1$ are interlacing and the morphism $X\to\pp^1$ given by $ x\mapsto (s_0(x):s_1(x))$ is separating.
\end{Prop}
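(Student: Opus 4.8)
The plan is to prove first that the morphism $f\colon x\mapsto(s_0(x):s_1(x))$ is separating, and then to deduce the interlacing property from Lemma~\ref{lem:interl}. Write $X(\R)=X_1\sqcup\cdots\sqcup X_r$, let $n_i$ be the number of zeros of $s_0$ on $X_i$, and let $e_i\in\Z$ be the winding number (topological degree) of the restriction $f|_{X_i}\colon X_i\to\pp^1(\R)$ of the circle $X_i$ to the circle $\pp^1(\R)$. Since $s_0$ has only simple real zeros, $\sum_i n_i=\deg\mathscr{L}=\deg f=:d$, and since $s_0,s_1$ generate $\mathscr{L}$ they have no common zero, so $f$ is a well-defined morphism of degree $d$. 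As $f|_{X_i}^{-1}(\infty)$ is precisely the zero set of $s_0$ on $X_i$, the elementary bound $\#f|_{X_i}^{-1}(p)\ge|e_i|$ at a regular value $p$ gives $|e_i|\le n_i$. The key will be to upgrade this to the equality $|e_i|=n_i$ for every $i$.

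To do so I would first record that the number of real zeros, counted with multiplicity, of a real section of a line bundle restricted to a circle $X_i$ has a fixed parity depending only on $\mathscr{L}|_{X_i}$; applied to $s_0$ and $s_1$ this yields $m_i\equiv n_i\pmod 2$, where $m_i$ denotes the number of real zeros of $s_1$ on $X_i$. For $i\in I$ the hypothesis already supplies a zero of $s_1$ in each of the $n_i$ arcs of $X_i\smallsetminus\{s_0=0\}$, so $m_i\ge n_i$; for $i\notin I$ one has $n_i\le1$ and the congruence forces $m_i\ge n_i$ as well. Since the remaining zeros of $s_1$ occur in complex conjugate pairs, $\sum_i m_i\le d=\sum_i n_i$, and comparison forces $m_i=n_i$ for all $i$ and shows that $s_1$ has only simple real zeros. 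In particular, for $i\in I$ the zeros of $s_0$ and of $s_1$ on $X_i$ are simple and strictly alternate, so between two consecutive zeros of $s_0$ the function $t=s_1/s_0$ runs from a simple pole through a single simple zero to the next simple pole, sweeping $\pp^1(\R)$ exactly once; the simplicity of the poles forces the direction of this sweep to be the same on every arc, whence $|e_i|=n_i$. For $i\notin I$ the equality follows from $|e_i|\le n_i\le1$ together with $e_i\equiv n_i\pmod2$.

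With $\sum_i|e_i|=\sum_i n_i=d$ established, separation falls out. For any regular value $p\in\pp^1(\R)$ of $f$ — these are cofinite in $\pp^1(\R)$ — each fibre point over $p$ on $X_i$ is transverse, so $\#(f^{-1}(p)\cap X_i)\ge|e_i|=n_i$, and summing gives $\#(f^{-1}(p)\cap X(\R))\ge d$. As $f^{-1}(p)$ has exactly $d$ points, it is entirely real and $\#(f^{-1}(p)\cap X_i)=n_i$ for each $i$ (in particular no $n_i$ can vanish, since otherwise $f|_{X_i}$ would miss every regular value and be constant). Now if some $q\in X(\C)\smallsetminus X(\R)$ satisfied $f(q)\in\pp^1(\R)$, then by the open mapping theorem $f$ would carry a neighbourhood of $q$ onto a neighbourhood of $f(q)$ meeting $\pp^1(\R)$ in regular values, producing a non-real preimage of such a value and contradicting the previous sentence. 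Hence $f^{-1}(\pp^1(\R))=X(\R)$, i.e.\ $f$ is separating. Since $s_0$ and $s_1$ now both have only simple real zeros and generate $\mathscr{L}$, Lemma~\ref{lem:interl} shows that they interlace, which finishes the proof. The step I expect to demand the most care is the equality $|e_i|=n_i$ for $i\in I$: this is exactly where the interlacing hypothesis enters, excluding the ``folding'' configurations in which the winding number would cancel, and the preliminary parity/degree count is what guarantees the simplicity of the zeros of $s_1$ needed to make the winding computation valid.
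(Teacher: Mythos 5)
Your proof is correct, but it takes a genuinely different route from the paper's. The paper disposes of the proposition in two sentences by recycling the perturbation argument from the proof of Lemma \ref{lem:interl}: if the morphism were not separating, some real section $\mu s_0+\lambda s_1$ would acquire a double real zero; on a component $X_i$ with $i\in I$ the real zeros are trapped one per arc of $X_i\smallsetminus\{P \mid s_1(P)=0\}$ (since $\mu s_0+\lambda s_1$ never vanishes at a zero of $s_1$), and on the remaining components $s_0$ has at most one zero, so no collision can occur. Your argument is instead global and topological: you introduce the winding numbers $e_i$ of $f|_{X_i}$, combine the parity invariance of the real degree on each oval of linearly equivalent real divisors (a standard fact in the spirit of the Gross--Harris reference the paper uses elsewhere, and the one external ingredient you import) with the count $\sum_i m_i\le d=\sum_i n_i$ to force $m_i=n_i$ and hence the simplicity and reality of all zeros of $s_1$, deduce $|e_i|=n_i$ from the strict alternation and the sign analysis at the simple poles of $s_1/s_0$, and conclude that all fibres over regular real values are totally real, whence separation via the open mapping theorem; interlacing then follows from Lemma \ref{lem:interl}. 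This is considerably longer than the paper's proof, but it avoids relying on the (standard, but unproved in the paper) claim that failure of separation produces a parameter with a double real zero, and it makes explicit several facts the paper leaves implicit: that $s_1$ automatically has only simple real zeros, that every component $X_i$ must carry a zero of $s_0$, and that the resulting degree partition is $(n_1,\ldots,n_r)$. Both arguments are sound; yours trades brevity for transparency and extra information.
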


\begin{proof}
 As in the proof of the preceding lemma we show that there is no $\lambda\in\mathbb{R}$ such that $s_0+\lambda s_1$ has a double zero on $X(\mathbb{R})$. Indeed, as shown in the proof of the preceding lemma, two zeros on one of the $X_i$ for  $i\in I$ cannot come together as $\lambda$ varies. Since there is only one zero of $s_0$ on the other components, the claim is also true for those.
\end{proof}

\subsection{Conditions for hyperbolic morphisms}

We first 
 point out that a curve being hyperbolic with respect to some linear space is a purely topological property.
 
We begin by recalling linking numbers of spheres embedded in the $n$-dimensional sphere $S^n$. For the general definition of linking numbers and more detailed information we refer to \cite{prasolov}. Suppose that $X$ and $Y$ are disjoint embedded oriented spheres in $S^n$ of dimensions $p$ and $q$ respectively where $n=p+q+1$. Consider the fundamental cycles $[X]$ and $[Y]$ as cycles in the integral homology of $S^n$. There exists a chain $W$ whose boundary is $[X]$. The \textit{linking number} $\lk(X,Y)$ is defined to be  the intersection number of $W$ and $[Y]$.

Now let $K \subset  \pp^n(\R)$ be  
the image of an embedding of $S^1$
and $L \subset \pp^n(\R)$ be a linear subspace of codimension $2$. 
Let $\pi \colon  S^n \to \pp^n(\R)$ be an unramified $2$ to $1$ covering map. Notice that $\pi^{-1}(L)$ is a sphere of dimension $n-2$ in $S^n$ and $\pi^{-1}(K)$ is either an embedded circle or two embedded circles. Define the linking number of $K$ and $L$ in $\pp^n(\R)$ to be the linking number of $\pi^{-1}(K)$ and $\pi^{-1}(L)$ in $S^n$ if $\pi^{-1}(K)$ is a single connected component and define it to be the sum of the linking numbers of $K_1$ with $L$ and $K_2$ with $L$ if $K_1 \cup K_2 = \pi^{-1}(K)$. Now we are able to give a topological characterization of hyperbolic curves in terms of linking numbers.

\begin{figure}
 \includegraphics[width=5cm]{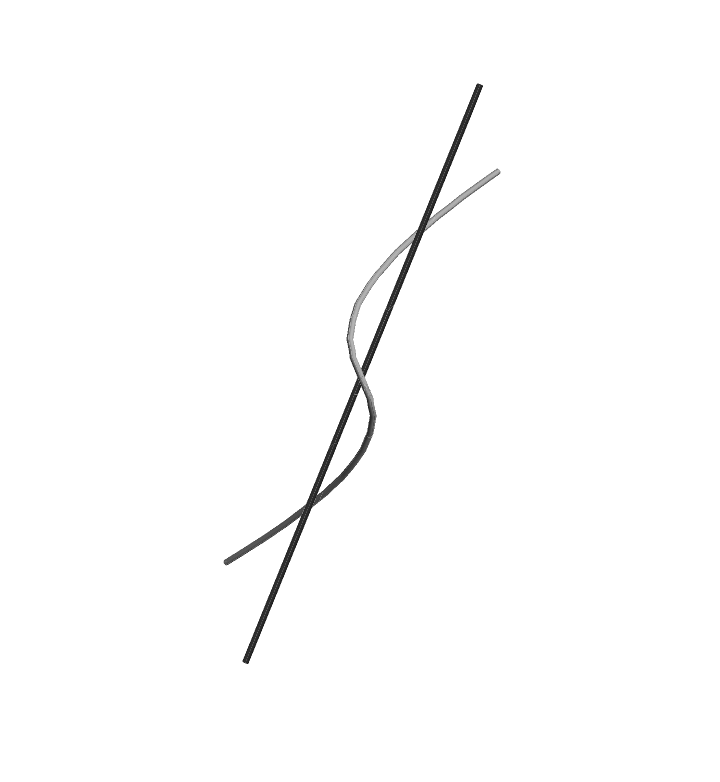} 
\caption{The twisted cubic drawn in green is hyperbolic with respect to the red line.  }
\label{fig:twistedcubic}
\end{figure}

\begin{Prop}\label{prop:topo}
 Let $X\subset\pp^n$ be a 
 curve 
 and $E\subset\pp^n$ be a linear subspace of dimension $n-2$ with $X\cap E=\emptyset$. 
 {Then $X$ is hyperbolic with respect to $E$ if and only if $\deg(X)=\sum_{i=1}^r |\lk(X_i,E(\R))|$.}
{When $X$ is hyperbolic with respect to $E$, then
 the tuple $$(|\lk(X_1,E(\R))|,\ldots,|\lk(X_r,E(\R))|)$$ is the element in $\textnormal{Hyp}(X)$ corresponding to the projection from $E$.}
\end{Prop}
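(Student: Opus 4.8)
The plan is to reduce the statement to a computation of topological degrees of circle maps, via the double cover $\pi\colon S^n\to\pp^n(\R)$ used to define the linking number. First I would choose linear forms $\ell_0,\ell_1\in\R[x_0,\ldots,x_n]$ cutting out $E$, so that $\pi_E=(\ell_0:\ell_1)$, and set $\tilde E=\pi^{-1}(E(\R))$, the unit $(n-2)$-sphere in $\{\ell_0=\ell_1=0\}$. I then define the angle map $\Phi\colon S^n\smallsetminus\tilde E\to S^1$ by $x\mapsto(\ell_0(x),\ell_1(x))/\|(\ell_0(x),\ell_1(x))\|$. Since $\ell_j(-x)=-\ell_j(x)$, this map is equivariant for the antipodal involutions and satisfies $q\circ\Phi=\pi_E\circ\pi$, where $q\colon S^1\to\pp^1(\R)$ is the double cover; moreover $\Phi$ is exactly the deformation retraction exhibiting $S^n\smallsetminus\tilde E\simeq S^1$, so for any circle $C\subset S^n\smallsetminus\tilde E$ one has $\lk(C,\tilde E)=\deg(\Phi|_C)$ up to a global sign.

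The key step, which I expect to be the main obstacle because of the one-versus-two-component dichotomy of $\pi^{-1}(X_i)$ and the attendant orientation bookkeeping, is to prove the identity $|\lk(X_i,E(\R))|=|\deg(f|_{X_i})|$ for every $i$, where $f=\pi_E$ and $\deg(f|_{X_i})$ denotes the topological degree of the circle map $X_i\to\pp^1(\R)$. I would establish this by restricting the relation $q\circ\Phi=f\circ\pi$ to the double cover $\pi^{-1}(X_i)\to X_i$ (of total degree $2$) and summing degrees over its components, having oriented each component by pulling back a fixed orientation of $X_i$ through the local homeomorphism $\pi$. Multiplicativity of degree in the two covers then yields $2\,\lk(X_i,E(\R))=2\,\deg(f|_{X_i})$, whence the identity; here $\lk(X_i,E(\R))$ is precisely the sum $\sum_C\deg(\Phi|_C)$ over the components $C$ of $\pi^{-1}(X_i)$, matching the definition in the text (a single circle when $X_i$ is a pseudoline, two circles when it is an oval), and this is what makes the orientation conventions cancel in absolute value.

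With this identity in hand both assertions follow. For generic $t\in\pp^1(\R)$ the signed count is bounded by the unsigned count, $|\deg(f|_{X_i})|\le N_i(t):=\#\{x\in X_i: f(x)=t\}$, and summing over $i$ gives $\sum_i|\lk(X_i,E(\R))|\le\#(X(\R)\cap f^{-1}(t))\le\deg X$, the last inequality because the complex fiber $f^{-1}(t)$ has exactly $\deg X$ points and $\deg(\pi_E)=\deg X$. If $\sum_i|\lk(X_i,E(\R))|=\deg X$, both inequalities must be equalities, so the generic real fiber is totally real. I would then conclude that $\pi_E$ is separating by the open mapping theorem: letting $X_+$ be a connected component of $X(\C)\smallsetminus X(\R)$, if $\pi_E$ were not separating then $f(X_+)$, being open and non-constant, would meet $\pp^1(\R)$ in an arc, producing a positive-dimensional family of real $t$ whose fibers contain a non-real point, contradicting total reality. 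Conversely, if $X$ is hyperbolic then $f$ is separating, so $f|_{X_i}$ is an unramified covering of degree $d_i$ by \cite[Theorem 2.19]{kummer2015real}, giving $|\deg(f|_{X_i})|=d_i=|\lk(X_i,E(\R))|$; summing yields $\sum_i|\lk(X_i,E(\R))|=\sum_i d_i=\deg(\pi_E)=\deg X$, and the tuple attached to the projection from $E$ is $(d_1,\ldots,d_r)=(|\lk(X_1,E(\R))|,\ldots,|\lk(X_r,E(\R))|)$, as claimed.
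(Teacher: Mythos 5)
Your proof is correct, and it reaches the same basic mechanism as the paper --- a signed (linking-number) count bounded above by an unsigned count of real points in a fiber, with equality to $\deg X$ forcing total reality --- but it realizes the linking number differently. The paper picks a hyperplane $H\supset E$, takes for the bounding chain $W$ a hemisphere of $\pi^{-1}(H)$ with $\partial W=\pi^{-1}(E)$, and reads $\lk(X_i,E(\R))$ off as the signed count of $\pi^{-1}(X_i)\cap W$, whose points are in bijection with the real points of $X_i\cap H$; since this works for \emph{every} hyperplane through $E$ at once, the equality case immediately gives total reality of all real fibers and hence separation. You instead identify $|\lk(X_i,E(\R))|$ with the topological degree of the circle map $f|_{X_i}\colon X_i\to\pp^1(\R)$ via the angle map and Alexander duality; your orientation bookkeeping for the one- versus two-component preimages is right, and the factor-of-two cancellation works exactly as you describe. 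The price of your route is that the degree of $f|_{X_i}$ only controls a \emph{generic} real fiber, so you need the additional open-mapping argument to upgrade generic total reality to separation --- that step is fine. The benefit is that the identity $|\lk(X_i,E(\R))|=|\deg(f|_{X_i})|$ makes explicit something the paper leaves tacit: its assertion that in the hyperbolic case the absolute values of the linking numbers sum to $\deg X$ relies on the intersection signs along a single $X_i$ being constant, which holds because $f|_{X_i}$ is an unramified covering, and your covering-degree computation handles exactly this point.
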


\begin{proof}
 The curve $X$ is hyperbolic with respect to $E$ if and only if every hyperplane $H\subset\pp^n$ that contains $E$ intersects $X$ in $\deg(X)$ many distinct real points. Let $\pi \colon  S^n \to \pp^n(\R)$ be an unramified $2$ to $1$ covering map. For any choice of a hyperplane $H\subset\pp^n$ that contains $E$, the preimage $X=\pi^{-1}(E)$ is a sphere of dimension $n-2$ inside $\pi^{-1}(H)$ which is a sphere of dimension $n-1$. Let $W\subset\pi^{-1}(H)$ be a hemisphere whose boundary is $\pi^{-1}(E)$.
 
  If $X$ is hyperbolic with respect to $E$, then the absolute values of the linking numbers $\lk(X_i,E(\R))$, which are the intersection numbers of the $\pi^{-1}(X_i)$ with $W$,  sum up to $\deg(X)$.
  Conversely, if the intersection number of $W$ with the preimage of $X(\R)$ is $\deg(X)$, then $H$ has (at least) $\deg(X)$ many real intersection points with $X$.
  The final statement about the element of $\textnormal{Hyp}(X)$ arising from the projection from $E$ is immediate. 
\end{proof}

\begin{Bsp}\label{exp:hyperboloid}

Let $Q\subset\pp^3$ be the quadratic surface defined by the equation $x^2+ y^2  =  z^2 + w^2$. Its real part $Q(\R)$ is the hyperboloid. 
For a curve $X$ contained in $Q$, we can describe a topological condition for $X$ to be hyperbolic with respect to the line $E$ given by
$x = y = 0$.

The hyperboloid $Q(\mathbb{R})$ is homeomorphic to the torus $S^1 \times S^1$ {and taking a real line from each of the two rulings of $Q$ gives a pair of generators 
of {$H_1(Q(\mathbb{R})) \cong \Z \oplus \Z$}.} 
We will assume that these lines are oriented in the upwards $z$ direction in the affine chart $w = 1$. 
For each hyperplane $H$ containing the line {$E$} we have $[H \cap Q(\mathbb{R})] = (1, 1) \in H_1(Q(\mathbb{R}))$, 
up to switching the orientation of $H \cap Q(\mathbb{R})$.

If $X \subset Q$, then a connected component of $X(\mathbb{R})$ realizes either the trivial class in $H_1(Q(\mathbb{R}))$ or the class $(p, q)$ for $p, q$ coprime integers. Otherwise, the connected components of $X(\mathbb{R})$ would have non-trivial intersections contradicting the fact that  $X$ is non-singular. 
In order for $X$ to be hyperbolic with respect to {$E$}, no component of $X(\mathbb{R})$ can realize the trivial class. This is because if a connected component $X_i$ is trivial in homology, it is the boundary of a disc contained in $Q$ which would not intersect {$E$} and so $d_i = |\lk(X_i; {E}(\mathbb{R}))| = 0.$ If furthermore $\deg(X) = r\cdot (p+q)$, 
then $X$ must be hyperbolic with respect to {$E$}.

{For example, we can construct a curve $X$ of degree $2k$ in $\mathbb{P}^3$ which is hyperbolic with respect to {$E$} in the following way. Let $X$ be the complete intersection of $Q$ with a hypersurface $S$ which is a small perturbation of the union of hyperplanes $H_i$ with equations of the form $z + a_iw$ for $i = 1, \dots, k$. Then $X(\R)$ consists of $k$ connected components, and each one realizes the class  $(1, -1) \in H_1(Q(\mathbb{R}))$} up to changing the orientations of the connected components of $X(\R)$. By the above remark, the curve $X$ is hyperbolic with respect to {$E$} and thus $(2, \dots, 2) \in \textnormal{Hyp}(X)$.  See the left hand side of Figure \ref{fig:genus3canon}. Let {$E'$} be any real line on $Q$. By our choice of generators of $H_1(Q(\mathbb{R}))$, we can suppose that $[{E'}(\mathbb{R}) ] = (1, 0)$, up to a change in  orientation. Therefore, the set of real points ${E'}(\mathbb{R})$ intersect  $X(\mathbb{R})$ in at least $k$ points. The projection of $X$ from {$E'$} is a separating map of degree $k$ with degree partition $(1, \dots, 1) \in \N^k$. The curve $X$ has genus $(k-1)^2$ and $X(\mathbb{R})$ has $k$ connected components. In particular, if $k \geq3$, then $X$ is not an $M$-curve, see Section \ref{sec:Mcurves}.  
\end{Bsp}

\begin{Bsp}\label{ex:sphere}
We can carry out the construction from the previous example with $Q$ now being the quadratic surface defined by $x^2+y^2+z^2=w^2$. Then $Q(\R)$ is the sphere. 
{Again let  $S$ be the hypersurface which is a small perturbation of the union of hyperplanes $H_i$ with equations of the form $z + a_iw$ with $-1 < a_i < 1$  for $i = 1, \dots, k$.

Let $X$ be the complete intersection of $S$ and $Q$.}  Then the resulting curve $X$ is again hyperbolic with respect to the line {$E$} defined by $x = y = 0$ and we have $(2, \dots, 2) \in \textnormal{Hyp}(X)$.  See the right hand side of Figure \ref{fig:genus3canon}.  As in Example \ref{exp:hyperboloid}, the curve $X$ has genus $(k-1)^2$ and $X(\mathbb{R})$ has $k$ connected components. But we will show that unlike in the preceding example, we have $(1,\ldots,1)\not\in\textnormal{Sep}(X)$ if $k>2$. In fact, there is no real morphism $X\to\pp^1$ of degree $k$.
{Any complex line on $Q$ intersects $X$ in $k$ points and the projection from this line gives a map to $\mathbb{P}^1$ of degree $k$. 
 The gonality of the complexified curve is $k$ and every morphism $X\to\pp^1$ of degree $k$ comes from the projection from a line by \cite{gonal} and \cite{uniqu}.
However, since the surface $Q$ does not contain any real lines, there are no real lines intersecting $X(\R)$ in more than two points proving the claim that there are no real morphisms of degree $k$ to $\mathbb{P}^1$. }
  \end{Bsp}
  
  \begin{figure}[ht]
 \includegraphics[width=4cm]{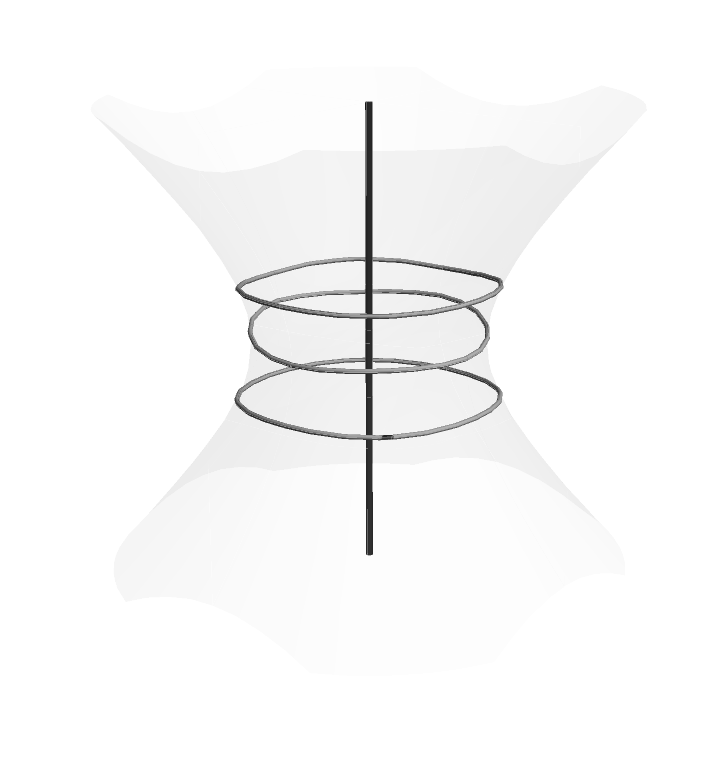} \quad
 \includegraphics[width=4cm]{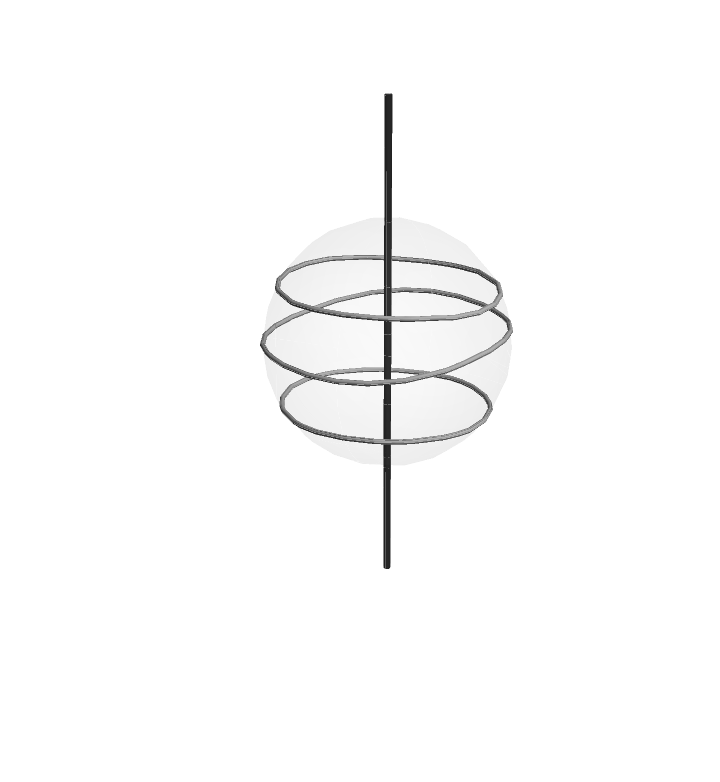}
\caption{The two canonical curves of genus four from Examples \ref{exp:hyperboloid} (left) and \ref{ex:sphere} (right)  with real part having three connected components that are hyperbolic with respect to the red line. The curve on the left has $(1,1,1)$ in its separating semigroup whereas the one on the right does not. } 
\label{fig:genus3canon}
\end{figure}

\begin{Bsp}
 
Let $X$ be a curve of genus $g$ with $X(\R)$ having $r$ connected components. 
Assume that there is a separating morphism $f \colon X\to\pp^1$ with the property that $f^*\cO_{\pp^1}(1)$ is the canonical line bundle. The degree of a divisor of a non-zero {real} holomorphic differential form on $X$ restricted to any connected component of $X(\mathbb{R})$ must be even, see for example \cite[Proposition 4.2]{grossharris}. Therefore, we have that $d(f)=2d'$ for some $d'\in\N^r$. This implies that $r\leq g-1$. Examples \ref{exp:genus2}, \ref{exp:plane}, and \ref{exp:hyperboloid} show that for  $g=2,3,4$ such a morphism exists for a curve with $r=g-1$. 
For planar curves $X\subset\pp^2$ of degree $d\geq 4$ the canonical bundle is given by $\mathcal{O}_X(d-3)$. Furthermore, it was shown in \cite{touze2013totally} that for some planar sextic curves with $9$ ovals there exists a pencil of cubics without base points on $X$ that gives rise to a separating morphism. Thus for $g=10$ and $r=9$ we can also find such a morphism. For curves of genus different from $2,3,4$ and $10$ we do not know if this is the case.   
\end{Bsp}

{The next two lemmas exclude specific elements from the hyperbolic semigroup. } 

\begin{Lem}\label{lem:rat111}
 {Let  $X$ be a curve of dividing type. If 
  $d = (1, \dots, 1)\in \textnormal{Hyp}(X)$, then $X$ is a rational curve, and hence $d = (1)$.
 }  
\end{Lem}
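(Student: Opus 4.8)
The plan is to work with the embedding furnished by the very ample bundle and to exploit that each real component is met exactly once by every real hyperplane. By the Remark following Definition \ref{def:hyp}, the hypothesis $(1,\dots,1)\in\textnormal{Hyp}(X)$ provides a separating morphism $f\colon X\to\pp^1$ with $d(f)=(1,\dots,1)$ such that $\mathscr{L}:=f^*\cO_{\pp^1}(1)$ is very ample. Since every $d_i(f)=1$, the degree of $f$ is $\deg\mathscr{L}=|d(f)|=r$, where $r$ is the number of connected components of $X(\R)$. As $f$ is defined over $\R$, the bundle $\mathscr{L}$ is real and I would consider the associated closed embedding $\phi\colon X\hookrightarrow \pp^N$ with $\pp^N=\pp(\Gamma(X,\mathscr{L})^\vee)$, under which real hyperplanes correspond to nonzero real sections of $\mathscr{L}$ up to scaling.

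The crux is the following claim: every nonzero real section $s\in\Gamma(X,\mathscr{L})$ has exactly one zero on each component $X_i$, all its zeros are real and simple, and there are no others. To prove it, first note that $f|_{X_i}\colon X_i\to\pp^1(\R)$ is a degree one covering, so $\mathscr{L}|_{X_i(\R)}$ is the pullback of $\cO_{\pp^1}(1)|_{\pp^1(\R)}$. The latter is the non-orientable real line bundle on $\pp^1(\R)\cong S^1$, since a linear form has a single zero on $\pp^1(\R)$, and a degree one covering preserves non-orientability. Hence any real section of $\mathscr{L}$ has an odd, in particular positive, number of transverse zeros on each $X_i$; here simplicity of the real zeros is guaranteed because separating morphisms are unramified over $\pp^1(\R)$ \cite[Theorem 2.19]{kummer2015real}, so the mod two count of zeros applies. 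Summing over the $r$ components, $s$ has at least $r$ real zeros, while the total number of zeros of $s$ equals $\deg\mathscr{L}=r$. Therefore $s$ has exactly one simple real zero on each component and no further (in particular no complex) zeros.

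With the claim in hand I would finish as follows. Translating sections into hyperplanes, every real hyperplane $H\subset\pp^N$ meets each $X_i(\R)$ in exactly one point. Suppose $N\geq 2$ and choose two distinct points $P,Q\in X_i(\R)$. The real linear forms vanishing at both form a space of dimension at least $N-1\geq 1$, so some real hyperplane $H$ contains $P$ and $Q$; then $H$ meets $X_i(\R)$ in at least two points, contradicting the claim. Hence $N\leq 1$. Since $\phi$ is a closed embedding and $X$ is a curve, also $N\geq 1$, so $N=1$ and $\phi$ identifies $X$ with an irreducible curve in $\pp^1$, which must be all of $\pp^1$. Thus $X\cong\pp^1$ is rational; in particular $r=1$ and $d=(1)$.

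I expect the main obstacle to be the parity step in the second paragraph: one must argue cleanly that $\mathscr{L}|_{X_i(\R)}$ is non-orientable and that the real zeros are simple, so that counting zeros modulo two yields at least one zero on each component. Once this is secured, the remainder is a short degree count combined with the elementary fact that, for $N\geq 2$, any two points of $\pp^N(\R)$ lie on a common real hyperplane.
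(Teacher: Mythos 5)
Your proof is correct, but it takes a genuinely different route from the paper's. The paper reduces to a hyperbolic embedding in $\pp^3$, projects from a general point to $\pp^2$, observes that each $X_i$ maps to a pseudoline so that the image acquires at least $\tfrac12 r(r-1)$ nodes, and then the genus--degree formula for the degree $r$ image gives $g\leq \tfrac12(r-1)(r-2)-\tfrac12 r(r-1)=1-r$, forcing $r=1$ and $g=0$. You instead work with the complete embedding attached to the very ample bundle $\mathscr{L}=f^*\cO_{\pp^1}(1)$ of degree $r$: since $f|_{X_i(\R)}$ is a degree one covering, hence a homeomorphism, $\mathscr{L}|_{X_i(\R)}$ is the M\"obius bundle, so every nonzero real section vanishes on each component; comparing with $\deg\mathscr{L}=r$ shows every real hyperplane meets $X(\R)$ in exactly one simple point per component, which is impossible in $\pp^N$ for $N\geq 2$ because any two real points lie on a common real hyperplane, whence $N=1$ and $X\cong\pp^1$. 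One small repair is needed: your appeal to the unramifiedness of separating morphisms over $\pp^1(\R)$ to justify transversality of the zeros of an \emph{arbitrary} real section $s$ is misplaced, since that theorem concerns the fibres of $f$ itself, not all of $\Gamma(X,\mathscr{L})$. But you do not need transversality: non-orientability of $\mathscr{L}|_{X_i(\R)}$ already forbids a nowhere-vanishing continuous section, so $s$ has at least one zero on each of the $r$ components; these are $r$ distinct zeros of a section of a bundle of degree $r$, so each is automatically simple and there are no further (in particular no non-real) zeros. With that adjustment your argument is complete. Your approach isolates a purely linear-system statement ($h^0(\mathscr{L})=2$) via a Stiefel--Whitney type obstruction and avoids the genus--degree formula entirely, while the paper's argument is shorter once the reduction to $\pp^3$ is granted and stays within the classical picture of pseudolines in the real projective plane.
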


\begin{proof}
 Assume that there is a hyperbolic embedding of $X$ to $\pp^3$ with each component having degree $1$. The linear projection of $X$ to the plane from any point not on $X$ will send each $X_i$ to a pseudoline. In $\pp^2$ each two pseudolines intersect, which implies that the image of $X$ will have at least $\frac{1}{2}r(r-1)$ simple nodes given that the center of projection was chosen generally enough. On the other hand, the degree is $r$. By the genus--degree formula this implies that \[g\leq \frac{1}{2}(r-1)(r-2)-\frac{1}{2}r(r-1)=1-r.
 \]
{Therefore, if $r = 1$ 
then  $X$ is a rational curve and  if  $r>1$ then $X$ cannot be irreducible. This proves the lemma. }
\end{proof}

\begin{Lem}\label{lem:not211}
{Let $X$ be a curve of dividing type such that  $r>2$. Then no permutation of $(2, 1, \dots, 1)$ is in $\textnormal{Hyp}(X)$.}
\end{Lem}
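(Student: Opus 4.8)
The plan is to imitate the node count in the proof of Lemma~\ref{lem:rat111}, but to project from a point lying \emph{on} the curve so as to lower the degree by one; this is exactly what turns the bound $g\le r-1$ that a naive count would give into a genuine contradiction. Suppose for contradiction that some permutation of $(2,1,\dots,1)$ lies in $\textnormal{Hyp}(X)$. Using the remark following Definition~\ref{def:hyp} I may assume $X\subset\pp^3$ is embedded and that the projection $\pi_E$ from a line $E\subset\pp^3$ with $E\cap X=\emptyset$ is separating with $d(\pi_E)$ equal to this permutation. By Proposition~\ref{prop:topo} we then have $\deg(X)=\sum_{i=1}^r|\lk(X_i,E(\R))|=r+1$ and $|\lk(X_i,E(\R))|=d_i$ for each $i$. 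After relabeling, let $X_1$ be the unique component with $d_1=2$; then for every $i\ge 2$ we have $d_i=1$, so that $|\lk(X_i,E(\R))|$ is odd.

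The key step is to project from a \emph{general real point} $p\in X_1(\R)$. Write $\pi_p\colon\pp^3\dashrightarrow\pp^2$ for this projection and $\bar X=\pi_p(X)$. Since $X$ is smooth at $p$, we get $\deg(\bar X)=(r+1)-1=r$, and for general $p$ the map $X\to\bar X$ is birational with only nodes as singularities. Hence $\bar X$ has geometric genus $g$ and arithmetic genus $\binom{r-1}{2}$, so the number of nodes of $\bar X$ equals $\binom{r-1}{2}-g$. Moreover $\pi_p$ restricts to a fibre bundle $\pp^3(\R)\smallsetminus\{p\}\to\pp^2(\R)$ with contractible ($\cong\R$) fibres, hence a homotopy equivalence, and in particular it preserves $\Z/2$-homology classes. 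For each $i\ge 2$ the component $X_i$ avoids $p$ and has nontrivial class in $H_1(\pp^3(\R);\Z/2)$, this class being detected by $|\lk(X_i,E(\R))|\equiv 1\bmod 2$; therefore its image $\bar X_i$ has nontrivial class in $H_1(\pp^2(\R);\Z/2)$, i.e.\ $\bar X_i$ is a pseudoline. This is the reason to project from the degree-two component: all $r-1$ of the degree-one components stay away from the centre and become pseudolines.

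Finally I would count intersections. Any two distinct pseudolines in $\pp^2(\R)$ meet in an odd, hence nonzero, number of points, so the $r-1$ pseudolines $\bar X_2,\dots,\bar X_r$ contribute at least $\binom{r-1}{2}$ distinct real nodes of $\bar X$ (distinctness holds for general $p$, as the image has only nodes and no triple points). Comparing with the exact node count gives
\[
\binom{r-1}{2}\;\le\;\binom{r-1}{2}-g,
\]
whence $g\le 0$. But $X$ is of dividing type with $r\ge 3$ components, so Harnack's bound $r\le g+1$ forces $g\ge r-1\ge 2$, a contradiction. Therefore no permutation of $(2,1,\dots,1)$ lies in $\textnormal{Hyp}(X)$.

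I expect the main obstacle to be the careful justification of projecting from a point on the curve: namely that for general $p\in X_1(\R)$ the morphism $\pi_p|_X$ is birational onto a plane curve with only nodes, so that the genus--degree computation is exact, and that the $\binom{r-1}{2}$ pseudoline intersections are realized as distinct nodes rather than coalescing into higher multiplicity points. Both facts should follow from genericity of $p$ (finiteness of the trisecants through a general point of $X$ and the absence of triple points for a general projection), together with the $\Z/2$-homology computation identifying the images of the degree-one components as pseudolines. The contrast with Lemma~\ref{lem:rat111} is instructive: there one projects from a point off the curve and the resulting bound $g\le 1-r$ already excludes $r>1$, whereas here the extra degree contributed by the degree-two component is precisely absorbed by lowering the degree through a point of the curve.
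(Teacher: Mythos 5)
Your route is genuinely different from the paper's. The paper stays entirely topological: fixing a hyperplane $H$ containing the center $L$, it records the intersection points $P_1,P_2\in X_1$ and $Q_i\in X_i$, moves $L$ inside $H$ to a line $L'$ through some $Q_{i_0}$ while staying in the same connected component of the set of lines avoiding $P_1,P_2$ (so that $|\lk(X_1,L'(\R))|=|\lk(X_1,L(\R))|=2$ forces every hyperplane through $L'$ to meet $X_1$ in at least two real points), and then the hyperplane spanned by $L'$ and a second point of $X_{i_0}$ meets $X$ in at least $r+2>\deg X$ points, a Bézout contradiction. You instead run the genus--degree count of Lemma~\ref{lem:rat111}, projecting from a point of the degree-two component so that the remaining $r-1$ components become $\Z/2$-nontrivial loops in $\pp^2(\R)$; the numerology ($\deg\bar X=r$, arithmetic genus $\binom{r-1}{2}$, at least $\binom{r-1}{2}$ forced crossings) does give $g\le 0$, contradicting Harnack's $g\ge r-1\ge 2$. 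Your homological step is correct: by Proposition~\ref{prop:topo} the parity of $|\lk(X_i,E(\R))|$ equals the mod-$2$ intersection number of $X_i$ with a hyperplane, so $d_i=1$ forces $[X_i]\neq 0$ in $H_1(\pp^3(\R);\Z/2)$, and the projection from $p$ preserves this class.

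Two of your supporting claims are stated too strongly, though both are repairable without new ideas. First, for a general $p\in X_1(\R)$ the image $\bar X$ need \emph{not} have only nodes: if $X$ lies on a quadric and one ruling cuts $X$ in $m\ge 3$ points, then \emph{every} point of $X$ lies on an $m$-secant line, and the projection from any $p\in X$ acquires a point of multiplicity $m-1$. Likewise the $\binom{r-1}{2}$ pairwise intersections of the loops $\bar X_2,\dots,\bar X_r$ need not be distinct points. The fix is to replace the node count by the $\delta$-invariant: once $\pi_p|_X$ is birational onto $\bar X$ (which for general $p$ follows from the trisecant lemma, using that $X$ is non-degenerate because $r>2$), one has $g=\binom{r-1}{2}-\sum_P\delta_P$, and at a singular point through which branches of $b_P$ distinct components pass one has $\delta_P\ge\binom{b_P}{2}$, so summing over all pairs $2\le i<j\le r$ still yields $\sum_P\delta_P\ge\binom{r-1}{2}$ and hence $g\le 0$. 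With these adjustments your argument is a valid, if heavier, alternative to the paper's elementary linking-number perturbation.
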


\begin{proof}
 Assume that $(2, 1, \dots, 1)\in\textnormal{Hyp}(X)$ and let $X\subset\pp^3$ be a realizing embedding. Let $X$ be hyperbolic with respect to a line $L$ and let $H\subset\pp^3$ be any hyperplane containing $L$. Let $H\cap X_1=\{P_1,P_2\}$ and $H\cap X_i=\{Q_i\}$ for all $i=2,\ldots,r$. Note that $X$ is not contained in any plane since $r>2$. Thus, we can assume that there is a $Q_{i_0}$ that is not on the line spanned by $P_1$ and $P_2$. The set $A$ of all real lines in $H$ that contain neither $P_1$ nor $P_2$ has two connected components. Let $L' \subset H$ be a line through $Q_{i_0}$ that is in the same connected component of $A$ as $L$. Every hyperplane containing $L'$ intersects $X_1$ in at least two real points because of Proposition \ref{prop:topo} and $|\lk(X_1,L(\R))|=|\lk(X_1,L'(\R)|$. Also it intersects every $X_i$ with $i\geq2$ in at least one real point. Now any hyperplane $H'$ spanned by $L'$ and a point $Q'\neq Q_{i_0}$ on $X_{i_0}$ would intersect $X$ in more than degree many points.
\end{proof}

\section{Some results for the general case}

In general it is not easy to determine the separating and the hyperbolic semigroup for a given curve. In this section we provide a method that allows us under some reasonable assumptions to construct from a separating morphism another separating morphism of one degree higher. The main result of this section is that any embedding of a separating curve of high enough degree is hyperbolic. Therefore, hyperbolic embeddings are the rule rather than the exception.

\begin{Lem}\label{lem:existsec}
 Let $\mathscr{L}$ be a line bundle on $X$. Let $s_1,s_2\in\Gamma(X,\mathscr{L})$ be two global sections. Let $(s_1)_0=P_0+\ldots+P_n$ with pairwise distinct $P_j\in X(\R)$ and $s_2(P_0)\neq0$. Let $U_j\subset X(\R)$ be an open neighbourhood of $P_j$ for all $j=1,\ldots,n$. There is an open neighbourhood $U_0\subset X(\R)$ of $P_0$ such that for every $Q_0\in U_0$ there are $Q_j\in U_j$ for $j=1,\ldots,n$ and $s_3\in\Gamma(X,\mathscr{L})$ with $(s_3)_0=Q_0+\ldots+Q_n$.
\end{Lem}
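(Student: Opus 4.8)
The plan is to move the single zero $P_0$ while dragging all the other zeros along, by varying $\lambda$ in the pencil spanned by $s_1$ and $s_2$. Since $s_1(P_0)=0$ but $s_2(P_0)\neq0$, the two sections are linearly independent, so I would consider the one-parameter family $s_\lambda := s_1+\lambda s_2\in\Gamma(X,\mathscr{L})$ for $\lambda\in\R$. At $\lambda=0$ we have $s_\lambda=s_1$, whose zero divisor is $P_0+\cdots+P_n$, a sum of $n+1=\deg\mathscr{L}$ distinct real points. The whole argument reduces to understanding how $(s_\lambda)_0$ varies for small $\lambda$, and then choosing $\lambda$ so that the zero near $P_0$ is exactly a prescribed point $Q_0$.

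Next I would localise the zeros. As the $P_j$ are pairwise distinct and each is a simple zero of $s_1$, I can choose pairwise disjoint, conjugation-invariant open neighbourhoods $V_j\subset X(\C)$ of $P_j$ with $V_j\cap X(\R)\subset U_j$ for $j=1,\ldots,n$. By continuity of the zeros of a holomorphic section under perturbation (Rou\-ch\'e's theorem applied in each $V_j$), there is an $\varepsilon>0$ so that for all $|\lambda|<\varepsilon$ the section $s_\lambda$ has exactly one zero in each $V_j$, necessarily simple; call it $Q_j(\lambda)$. Since $\deg\mathscr{L}=n+1$, these account for all the zeros of $s_\lambda$. Reality of $Q_j(\lambda)$ follows from a symmetry argument: $s_\lambda$ is a real section, so its zero set is invariant under complex conjugation; as $V_j$ is conjugation-invariant and contains a unique zero, that zero is fixed by conjugation, hence $Q_j(\lambda)\in X(\R)$ and so $Q_j(\lambda)\in U_j$. (If $P_j$ happens to be a common zero of $s_1$ and $s_2$, this merely forces $Q_j(\lambda)=P_j$ for every $\lambda$, which is harmless.)

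The key step, and the place where the hypothesis $s_2(P_0)\neq0$ is used, is to show that the zero $Q_0(\lambda)\in V_0$ near $P_0$ sweeps out a full neighbourhood of $P_0$. In a local real coordinate $t$ at $P_0$ and a local trivialisation of $\mathscr{L}$, write $s_1\leftrightarrow f_1(t)$ and $s_2\leftrightarrow f_2(t)$, with $f_1(0)=0$ and $f_1'(0)\neq0$ (because $P_0$ is a simple zero of $s_1$) and $f_2(0)\neq0$ (this is precisely $s_2(P_0)\neq0$). Applying the implicit function theorem to $f_1(t)+\lambda f_2(t)=0$ produces a zero $t(\lambda)$ with $t(0)=0$ and $t'(0)=-f_2(0)/f_1'(0)\neq0$, so after shrinking $\varepsilon$ the map $\lambda\mapsto Q_0(\lambda)$ is a diffeomorphism from $(-\varepsilon,\varepsilon)$ onto an open interval $U_0\ni P_0$. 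I would then take this interval as the neighbourhood $U_0$: for every $Q_0\in U_0$ there is a unique $\lambda$ with $Q_0(\lambda)=Q_0$, and setting $s_3:=s_\lambda$ yields $(s_3)_0=Q_0+Q_1(\lambda)+\cdots+Q_n(\lambda)$ with each $Q_j(\lambda)\in U_j$, as required.

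The only genuinely delicate point is the simultaneous control of the auxiliary zeros $Q_1(\lambda),\ldots,Q_n(\lambda)$, which must stay real, simple, and inside the prescribed $U_j$ as $\lambda$ varies. I expect this to be the main obstacle, but it is handled by the combination of Rou\-ch\'e's theorem (localisation and simplicity), the conjugation-symmetry argument (reality), and a further shrinking of $\varepsilon$ (to keep them inside the $U_j$); the rest is the elementary implicit-function-theorem computation that exploits $s_2(P_0)\neq0$.
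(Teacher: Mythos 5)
Your proposal is correct and follows essentially the same route as the paper: the paper considers the rational function $f=s_1/s_2$ and the family $f+\epsilon$, which is exactly your pencil $s_1+\lambda s_2$ in disguise, and likewise uses continuity of the zeros to keep $Q_1,\ldots,Q_n$ in the prescribed neighbourhoods while the hypothesis $s_2(P_0)\neq 0$ lets the zero near $P_0$ sweep out a full neighbourhood (the paper simply solves $\epsilon=-f(Q_0)$ where you invoke the implicit function theorem). Your Rouch\'e-plus-conjugation argument for reality and simplicity of the perturbed zeros is a more explicit rendering of the step the paper leaves implicit, but the underlying mechanism is identical.
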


\begin{proof}
 It suffices to show the claim in the case where the $U_j$ are pairwise disjoint and do not contain $P_0$ for $j=1,\ldots,n$. Let $f$ be a rational function on $X$ with the property that $(f)=(s_1)_0-(s_2)_0$. Since $f$ has only  simple and real zeros, there is a $c>0$ such that for all $\epsilon\in ]-c,c[$ the rational function $f+\epsilon$ has a zero in each $U_j$ for all $j$ with $P_j$ a zero of $f$. Now let $U_0\subset X(\R)$ be an open neighbourhood of $P_0$ which is disjoint from each of the $U_j$ and is contained in $f^{-1}(]-c,c[)$. Then for every $Q_0\in U_0$ there is an $\epsilon\in ]-c,c[$ such that $Q_0$ is a zero of $f+\epsilon$. Note that $f+\epsilon$ has the same poles of the same orders as $f$. Thus, the effective divisor $(f+\epsilon)+(s_2)_0$ is of the form $Q_0+\ldots+Q_n$ where $Q_j\in U_j$.
\end{proof}

\begin{Prop}\label{prop:semigroupgen}
 Let $\mathscr{L}$ be a line bundle on $X$. Let $s_0,s_1\in\Gamma(X,\mathscr{L})$ be two global sections that interlace. Let $D=(s_0)_0$ and let $P\in X(\R)$ with $s_0(P)\neq0$ such that $\ell(D+P)>\ell(D)$. Then there are interlacing sections $s_0', s_1' \in\Gamma(X,\mathscr{L}')$ such that $(s_0')_0=D+P$ where $\mathscr{L}'$ is the line bundle corresponding to $D+P$.
\end{Prop}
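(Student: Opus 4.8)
The plan is to take $s_0'$ to be the tautological section of $\mathscr{L}'=\mathscr{O}_X(D+P)$ and to build $s_1'$ by perturbing the section $s_1\cdot\sigma_P$ of $\mathscr{L}'$, sliding its zero at $P$ slightly off $P$ into the correct gap by means of Lemma~\ref{lem:existsec}. The hypothesis $\ell(D+P)>\ell(D)$ will enter precisely as the statement that this zero can be detached from $P$. First I would dispose of the easy parts. Since $s_0$ interlaces $s_1$, the divisor $D=(s_0)_0$ is reduced and real, and $P\notin\operatorname{supp}(D)$ because $s_0(P)\neq0$; hence the tautological section $s_0'\in\Gamma(X,\mathscr{L}')$ satisfies $(s_0')_0=D+P$ with pairwise distinct real zeros. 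Next I would arrange $s_1(P)\neq0$: if $s_1(P)=0$, replace $s_1$ by $s_1+\lambda s_0$ for a $\lambda\in\R$ with $s_1(P)+\lambda s_0(P)\neq0$, which exists since $s_0(P)\neq0$. This changes neither $\mathscr{L}$ nor the span $\langle s_0,s_1\rangle$, and it alters the morphism $(s_0:s_1)$ only by post-composition with the real automorphism $(u:v)\mapsto(u:v+\lambda u)$ of $\pp^1$, which preserves $\pp^1(\R)$; the morphism therefore stays separating, so by Lemma~\ref{lem:interl} the new pair still interlaces. Thus we may assume $s_1(P)\neq0$.

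Then I would set up Lemma~\ref{lem:existsec}. Writing $(s_1)_0=b_1+\dots+b_n$ and letting $\sigma_P$ be the canonical section of $\mathscr{O}_X(P)$, the section $t_1:=s_1\cdot\sigma_P\in\Gamma(X,\mathscr{L}')$ has $(t_1)_0=P+b_1+\dots+b_n$ with $n+1$ distinct real zeros, using $s_1(P)\neq0$ and $P\notin\operatorname{supp}(D)$. The hypothesis $\ell(D+P)>\ell(D)$ means exactly that $P$ is not a base point of $|D+P|$: the sections of $\mathscr{L}'$ vanishing at $P$ are precisely $\sigma_P\cdot\Gamma(X,\mathscr{L})$, of dimension $\ell(D)$, so there is a section $t_2\in\Gamma(X,\mathscr{L}')$ with $t_2(P)\neq0$. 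I would apply Lemma~\ref{lem:existsec} to the bundle $\mathscr{L}'$ with $t_1$ in the role of its $s_1$ and $t_2$ in the role of its $s_2$, taking the distinguished zero to be $P_0:=P$ (legitimate since $t_2(P)\neq0$) and $P_j:=b_j$, and choosing each neighbourhood $U_j$ so small that it lies inside the connected component of $X(\R)\smallsetminus\operatorname{supp}(D+P)$ containing $b_j$ and avoids $\operatorname{supp}(D+P)$. The lemma then produces a neighbourhood $U_0$ of $P$ such that every $Q_0\in U_0$ extends to a section $s_1':=s_3\in\Gamma(X,\mathscr{L}')$ with $(s_1')_0=Q_0+Q_1+\dots+Q_n$ and $Q_j\in U_j$.

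Finally I would choose $Q_0$ on the correct side of $P$ and verify interlacing. The point $P$ lies in a component $G=(a,a')$ of $X(\R)\smallsetminus\operatorname{supp}(D)$ containing the single $s_1$-zero $b^*$ of that component, say $b^*\in(a,P)$ after naming the endpoints; adding $P$ splits $G$ into $G'=(a,P)\ni b^*$ and $G''=(P,a')$. I would pick $Q_0\in U_0\cap G''$ with $Q_0\neq P$, which is possible because $P$ is an endpoint of the arc $G''$, and shrink the neighbourhood of $b^*$ to lie inside $G'$. Then every connected component of $X(\R)\smallsetminus\operatorname{supp}(D+P)$ contains exactly one zero of $s_1'$: each $Q_j$ stays in the unchanged component around its $b_j$ (with $b^*$'s copy in $G'$), while $Q_0$ occupies $G''$. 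Since $s_0'$ and $s_1'$ have disjoint reduced real zero divisors they generate $\mathscr{L}'$, so Proposition~\ref{prop:interl} (or the definition of interlacing applied directly) shows that $s_0'$ and $s_1'$ interlace, as required.

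The step I expect to be most delicate is this last one: confirming that sliding the single zero at $P$ into $G''$ while the perturbed zeros $Q_j$ remain trapped in their original components produces exactly one zero of $s_1'$ per component of $X(\R)\smallsetminus\operatorname{supp}(D+P)$. The one genuinely conceptual input is the reading of $\ell(D+P)>\ell(D)$ as ``$P$ is not a base point of $|D+P|$,'' which is exactly what allows Lemma~\ref{lem:existsec} to move the zero off $P$ in the first place; without it the zero of every section would be pinned at $P$ and no interlacing partner for $s_0'$ could exist.
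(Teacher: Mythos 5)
Your proof is correct and follows essentially the same route as the paper's: assume $s_1(P)\neq 0$, pass to the section with zero divisor $D'+P$ and a section of $\mathscr{L}'$ not vanishing at $P$ (your $t_2$ is the paper's $f_2$, obtained there via a rational function with a pole at $P$), and apply Lemma~\ref{lem:existsec} to slide the zero at $P$ into the new gap. Your explicit verification of which side of $P$ the point $Q_0$ must land on, and your justification of the reduction to $s_1(P)\neq0$, are details the paper leaves implicit, but the argument is the same.
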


\begin{proof}
 Without loss of generality, we can assume that $s_1(P)\neq0$. Let $(s_1)_0=D'=P_1+\dots+P_n$. Since $\ell(D+P)>\ell(D)$, there is a rational function $g\in\mathscr{L}(D+P)$ that has a pole at $P$. Then $D''=D+P+(g)$ is an effective divisor with $P\not\in\Supp D''$. The effective divisors $D+P$, $D'+P$, and $D''$ correspond to global sections $s_0'$, $f_1$, and $f_2$ of $\mathscr{L}'$, respectively. Let $U_j$ be the connected component of $X(\R)\smallsetminus\Supp(D+P)$ that contains $P_j$ for all $j=1,\dots,n$.  Applying Lemma \ref{lem:existsec} to $f_1$ and $f_2$ shows that there is a global section $s_1'$ of $\mathscr{L}'$ such that $s_0'$ and $s_1'$ interlace.
\end{proof}

\begin{Bem}\label{rem:special}
 The condition $\ell(D+P)>\ell(D)$ is for example satisfied when $D$ is non-special.
\end{Bem}

\begin{Kor}\label{cor:orthants}
Let $X$ be a curve of dividing type. Let  $d\in\textnormal{Sep}(X)$ and $l$ be the number of indices where $d_i$ is odd. If $|d|+l\geq 2g-1$,  then $d+\Z_{\geq0}^r\subset\textnormal{Sep}(X)$. 

\end{Kor}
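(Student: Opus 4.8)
The plan is to prove the statement by induction, adding one to a single coordinate at each step, with Proposition \ref{prop:semigroupgen} as the engine. First I would record a monotonicity observation: raising one coordinate $d_i$ by one increases $|d|$ by one and flips the parity of $d_i$, so the quantity $|d|+l$ is unchanged when $d_i$ is odd and increases by two when $d_i$ is even. In particular the condition $|d|+l\ge 2g-1$ is preserved under every coordinate increment, so it is enough to establish the single step: if $d\in\textnormal{Sep}(X)$ satisfies $|d|+l\ge 2g-1$, then $d+e_i\in\textnormal{Sep}(X)$ for each $i$, where $e_i$ denotes the $i$-th standard basis vector. To prepare the step, realize $d$ by a separating morphism $f$ and pull back linear forms vanishing at two distinct points of $\pp^1(\R)$; this gives sections $s_0,s_1$ generating $\mathscr{L}=f^*\cO_{\pp^1}(1)$ whose zeros are simple and real, with $s_0$ and $s_1$ interlacing by Lemma \ref{lem:interl}. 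The divisor $D:=(s_0)_0$ is then an effective real divisor carrying exactly $d_j$ points on each component $X_j$.

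The crux, and the only genuine obstacle, will be checking the hypothesis $\ell(D+P)>\ell(D)$ of Proposition \ref{prop:semigroupgen}. I claim that $|d|+l\ge 2g-1$ forces $D$ to be non-special, after which Remark \ref{rem:special} supplies the inequality for every $P$. Rewriting the hypothesis gives $\deg(K-D)=2g-2-|d|\le l-1$, where $K$ is the canonical divisor. Suppose, for contradiction, that $\ell(K-D)>0$. Since $K$ and $D$ are defined over $\R$, the space of holomorphic differentials vanishing along $D$ has a nonzero real point, so there is a nonzero real holomorphic differential $\omega$ with $(\omega)\ge D$; set $E:=(\omega)-D\ge0$, an effective real divisor of degree $2g-2-|d|$. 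The degree of $(\omega)$ restricted to each $X_j$ is even \cite[Proposition 4.2]{grossharris}, so $\deg(E|_{X_j})\equiv d_j\pmod 2$. Hence $E$ meets every component with $d_j$ odd, giving $\deg E\ge l$ and contradicting $\deg E=\deg(K-D)\le l-1$. Therefore $D$ is non-special.

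With $D$ non-special, Remark \ref{rem:special} yields $\ell(D+P)>\ell(D)$ for all $P$; choosing $P\in X_i(\R)$ outside the finite set $\Supp(D)$ so that $s_0(P)\ne0$, Proposition \ref{prop:semigroupgen} produces interlacing sections $s_0',s_1'$ with $(s_0')_0=D+P$. By Lemma \ref{lem:interl} the morphism $x\mapsto(s_0'(x):s_1'(x))$ is separating, and its degree partition is that of $D+P$, namely $d+e_i$; thus $d+e_i\in\textnormal{Sep}(X)$. This establishes the inductive step. Applying it repeatedly starting from $d$ — legitimate since $|d|+l\ge2g-1$ persists throughout — gives $d+e\in\textnormal{Sep}(X)$ for every $e\in\Z_{\ge0}^r$, that is, $d+\Z_{\geq0}^r\subset\textnormal{Sep}(X)$.
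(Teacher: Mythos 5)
Your argument is correct and follows the same overall strategy as the paper: realize $d$ by interlacing sections, verify that the zero divisor is non-special, and then iterate Proposition \ref{prop:semigroupgen} via Remark \ref{rem:special}, noting as you do that the numerical hypothesis $|d|+l\geq 2g-1$ is preserved under each coordinate increment. The one place where you diverge is the non-specialty claim: the paper simply cites an external result ([Theorem 2.5] of the reference abbreviated \texttt{nsd}) stating that the divisor of a separating morphism realizing $d$ with $|d|+l\geq 2g-1$ is non-special, whereas you prove it from scratch. Your parity argument is sound: a nonzero real section of $\mathscr{L}(K-D)$ yields a real holomorphic differential $\omega$ with $(\omega)=D+E$, $E\geq 0$ real of degree $2g-2-|d|\leq l-1$; since $\deg\bigl((\omega)|_{X_j}\bigr)$ is even for each real component, $E$ must have odd (hence positive) degree on every component where $d_j$ is odd, forcing $\deg E\geq l$, a contradiction. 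This makes the corollary self-contained at the cost of a few lines, and in effect reproves the cited theorem in the case needed here; the paper's version buys brevity by outsourcing exactly this step.
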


\begin{proof}
By \cite[Theorem 2.5]{nsd} the divisor corresponding to a separating morphism realizing $d$ is non-special. By Remark \ref{rem:special} the statement then follows from  Proposition \ref{prop:semigroupgen}.
\end{proof}

\begin{proof}[Proof of Theorem \ref{thm:alwayshyperbolic}]
 By Ahlfors' theorem \cite[\S 4.2]{Ahl50} there is a separating morphism $f \colon X\to\pp^1$. By Proposition \ref{prop:semigroup} we can furthermore assume that the degree of $f$ is more than $2g-2$. Let $\mathscr{L}=f^*\mathcal{O}_{\pp^1}(1)$ and $s_0,s_1\in\Gamma(X,\mathscr{L})$ be two global sections that interlace. Then $D_0=(s_0)_0$ is non-special. By \cite[Cor. 2.10, Rem. 2.14]{scheider}, there is an integer $n>0$ such that every real divisor on $X$ of degree at least $n$ is linearly equivalent to a sum of distinct points from $X(\R)\smallsetminus\Supp D_0$. We will show that the claim holds for $k=\max(2g+1,n+\deg D_0)$. Indeed, let $D$ be a divisor with $\deg D\geq k$. Then $D-D_0$ is linearly equivalent to a  sum of distinct points from $X(\R)\smallsetminus\Supp D_0$ since $\deg(D-D_0) \geq n$. An iterated application of the previous proposition shows that the corresponding embedding of $X$ to $\pp(\mathscr{L}(D)^\vee)$ is hyperbolic.
\end{proof}

\begin{proof}[Proof of Theorem \ref{thm:pencils}]
 The existence of a totally real pencil of curves of degree $k$  satisfying our assumptions  follows immediately by applying Theorem \ref{thm:alwayshyperbolic} to the line bundle $\mathcal{O}_{X}(k)$.
\end{proof}

\begin{Bem}
 If we allow base points on the curve, then the existence {of a totally real pencil} 
 simply follows from Ahlfors' theorem \cite[\S 4.2]{Ahl50}. Indeed, let $X\subset\pp^2$ be a curve of dividing type and $f \colon X\to\pp^1$ be a separating morphism defined by two sections $s_0,s_1$ of a suitable line bundle. The rational function $\frac{s_0}{s_1}$ can be expressed in the coordinates $x,y,z$ of $\pp^2$ as the fraction of two homogeneous forms $f,g$ in $x,y,z$ of the same degree. Then $\mathcal{V}(\lambda f+\mu g)$ intersects $X$ only in real points for every $\lambda,\mu\in\R$ not both zero. {However, the intersection $\mathcal{V}(f,g)\cap X$ is in general not empty, so the statement of  Theorem \ref{thm:pencils} does not follow from this argument.}
\end{Bem}

\begin{frag}\label{frag:k}
{What are the smallest possible values for $k$ in Theorem \ref{thm:alwayshyperbolic} and Theorem \ref{thm:pencils}?}
\end{frag}

\begin{Bem} 
{Since our proof of Theorem \ref{thm:pencils} is not constructive,
it further motivates the desire for a bound on $k$ in   Question \ref{frag:k}.
In \cite{touze2013totally}, it  was shown that for plane sextic curves of type $\langle 2 \amalg 1 \langle 6 \rangle\rangle$ and $\langle 6 \amalg 1 \langle 2 \rangle\rangle$ one can choose $k=3$ in Theorem \ref{thm:pencils}. 
}
\end{Bem}

\begin{Bsp}
We consider a planar hyperbolic quartic, for example the \textit{Vinnikov quartic} $X\subset\mathbb{P}^2$ {from \cite[Example 4.1]{quarticexp}} which is the planar curve defined by $$2 x^4 + y^4 + z^4 - 3 x^2 y^2 - 3 x^2 z^2 + y^2 z^2=0.$$ We have $g=3$ and $r=2$. Moreover, the curve $X$ is of dividing type since it is hyperbolic with respect to $(0:0:1)$. Let $X_1$ be the inner oval and $X_2$ be the outer oval. We have $(1,2), (2,2)\in\textnormal{Sep}(X)$, but  Example  \ref{exp:plane} shows that  $(1,1), (2,1)\not\in\textnormal{Sep}(X)$.  We can also realize $(3,2)$ and $(1,3)$ with pencils of conics having $3$ and $4$ base points {on $X(\R)$} respectively, see Figure \ref{fig:quartic}. In coordinates, the separating morphisms are given by the rational functions $\frac{(2z + \sqrt{2} x - 2 y)(-2z + \sqrt{2} x - 4 y)}{xy}$ and $\frac{(2z + \sqrt{2} x - 2 y) (z + x + y)}{xy}$, respectively. 
 By Corollary \ref{cor:orthants}, we have that $(1,2)+\mathbb{Z}_{\geq0} \subset\textnormal{Sep}(X)$. 
 This determines the separating semigroup of the Vinnikov curve except for some cases when $d_2 = 1$.  
In fact, we do not know whether or not  $(n,1)$ is in $\textnormal{Sep}(X)$ for $n \in \N \smallsetminus \{1, 2\}$.
\end{Bsp}

\begin{figure}[ht]
 \includegraphics[width=4cm]{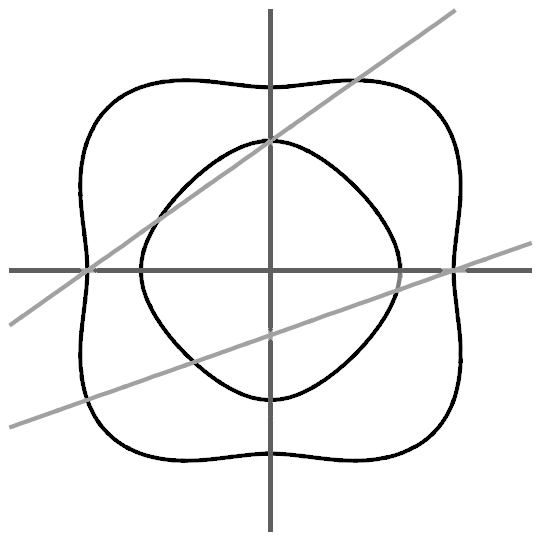} \quad
 \includegraphics[width=4cm]{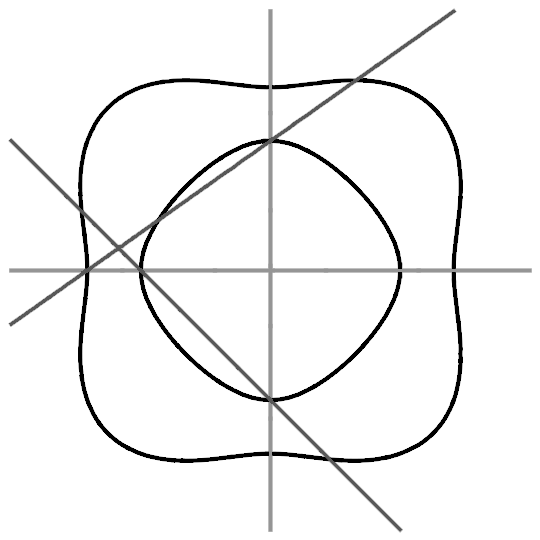}
\caption{Realizing $(3,2)$ and $(1,3)$ with pencils of conics having $3$ and $4$ base points on {$X(\R)$}.  }
\label{fig:quartic}
\end{figure}

\section{M-curves}\label{sec:Mcurves}

Recall that an $M$-curve $X$ has exactly $g+1$ connected components in $X(\R)$ and that every $M$-curve is of dividing type. Here we prove Theorem \ref{thm:mcurves}, which  completely determines the separating and hyperbolic semigroups of $M$-curves.

\begin{proof}[Proof of Theorem \ref{thm:mcurves}]
Part $a)$ is just Corollary \ref{cor:p1}.
By \cite[Proposition 4.1]{gabard} or \cite[\S 4.2]{Ahl50} the all ones vector is in $\textnormal{Sep}(X)$ for any genus. Moreover, any choice of divisor $D=P_1+\ldots+P_{g+1}$ where $P_i\in X_i$ for $i=1,\ldots,g+1$ is non-special by \cite[Theorem 2.4]{huisman}. Thus $\ell(D)=2$ and any non-constant rational function $f\in\mathscr{L}(D)$ gives rise to a separating morphism realizing $(1,\ldots,1)\in\textnormal{Sep}(X)$. By iterated application of Proposition \ref{prop:semigroupgen} and Remark \ref{rem:special} we find that  $\textnormal{Sep}(X) = \mathbb{N}^r$. This proves the claim on $\textnormal{Sep}(X)$ for $X$ of any genus.

By Halphen's theorem \cite[Proof of Proposition 6.1] {Hart77} the space of divisors which are not very ample is of codimension at least one in the space of all divisors of degree at least $g+3$. Therefore given an effective divisor $D$ realizing $d=(d_1, \dots, d_r) \in \textnormal{Sep}(X)$ with $|d| \geq g+3$ obtained from the above construction, we can assume that it is very ample. Thus $d=(d_1, \dots, d_r) \in \textnormal{Hyp}(X)$ whenever $|d| \geq g+3$. For the hyperbolic semigroup in part $b)$ note that $(1,1)\not\in\textnormal{Hyp}(X)$ by Lemma \ref{lem:rat111} and that $(1,2),(2,1)\in\textnormal{Hyp}(X)$ by Example \ref{exp:onlyelliptic}. Part $c)$ follows from the above and Lemmas \ref{lem:rat111} and \ref{lem:not211}.

\end{proof}

\section{Domain of hyperbolicity}\label{sec:locus}

Given a  curve $X\subset\mathbb{P}^n$ {not contained in a hyperplane},  we study the hyperbolicity locus $\mathcal{H}(X)$ of $X$. 
Define $$\mathcal{H}(X) := \{ L \ | \ X \text{ is hyperbolic  with respect to } L \} \subset\text{Gr}(n-1, n+1),$$ 
where $\text{Gr}(n-1, n+1)$ is the space of   codimension $2$ subspaces of $\mathbb{P}^n$. 
In \cite[Question 3.13]{Sha14} the authors asked  whether $\mathcal{H}(X)$ is connected. This question is motivated by the case of planar curves where the answer is yes \cite[Theorem 5.2]{HV07}. Further evidence towards a positive answer was given by \cite[Theorem 7.2]{Sha14} which states that $\mathcal{H}(X)$ is the intersection of $\text{Gr}(n-1, n+1)$ with a convex cone in $\pp^{N-1}(\R)$ where $N=\binom{n}{n-2}$. Here we consider $\text{Gr}(n-1, n+1)$ as a subset of $\pp^{N-1}$ via the Pl\"ucker embedding.
However, the next example shows that the hyperbolicity locus is in general not connected. 

\begin{Bsp}\label{exp:connected}
Let $X\subset\pp^2$ be a planar elliptic curve with $X(\R)$ having two connected components $X_1$ and $X_2$. We can find quadrics $q_1, q_1', q_2, q_2' \in\cO_X(2)$ such that $q_i$ and $q_i'$ interlace. 
 Furthermore, we can assume that  $q_i$ intersects $X_i$ in exactly $4$ points. 
 
Consider  the image of $X$ in $\pp^5$ under the second Veronese embedding. The quadrics $q_i$ and $q_i'$  determine hyperplanes $H_i$ and $H_i'$ in $\pp^5$. Because $q_i$ and $q_i'$ interlace, the image of $X$ is hyperbolic with respect to the $3$-planes $L_i=H_i\cap H_i'$. The projection maps produce the elements $(2, 4)$ and $(4, 2)$ of the semigroup $\textnormal{Hyp}(X)$.  Therefore, the $3$-planes $L_1$ and $L_2$ can not lie in the same connected component of $\mathcal{H}(X)$.
\end{Bsp}

\begin{figure}[ht]
 \includegraphics[width=5cm]{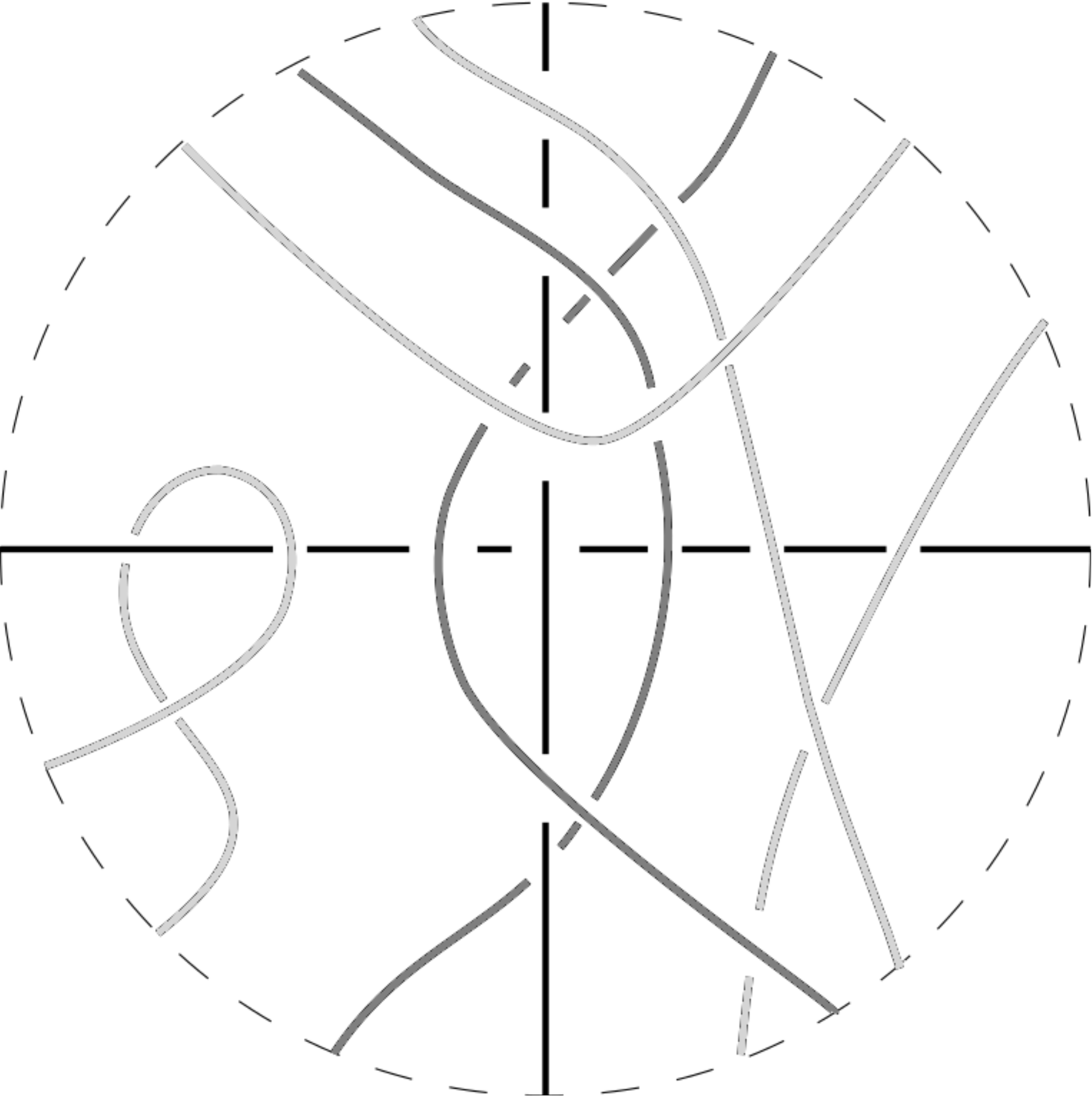} 
\caption{The link diagram of the curve from Example \ref{exp:explicit} together with two lines of hyperbolicity giving rise to different degree partitions.}
\label{fig:notconnected}
\end{figure}

\begin{Bsp}\label{exp:explicit}
One can even obtain an example of a hyperbolic curve $X\subset\pp^3$ with $\mathcal{H}(X)$ not connected. For this we proceed as is Example \ref{exp:connected} to construct two totally real pencils of conics,  $\lambda q_0 + \mu q_1$ and $\lambda p_0 +  \mu p_1$, that give rise to different degree partitions on a plane elliptic curve. Then $X$ is obtained as the image the plane elliptic curve under the map $$x\mapsto (q_0(x):q_1(x):p_0(x):p_1(x)).$$ 
For a precise example, take the  image of the plane curve defined by $-z^3 + 2 x z^2 - x^3 + y^2 z=0$ under the above map with $q_0=x y$, $q_1=x^2-y^2$, $p_0=y(x - 4z)$ and $p_1=(3x - 4z - y)(2x + y)$.

Figure \ref{fig:notconnected} shows the link diagram, up to isotopy, following a linear projection of this embedding back to $\mathbb{P}^2$. The two components of the links are depicted in red and blue, and the image of the two lines of hyperbolicity are shown in black. The real projective plane is depicted as a disk with antipodal boundary points identified. 
\end{Bsp}

\begin{frag}
 Is there an example of a non-singular  curve $X\subset\mathbb{P}^n$ where two connected components of $\mathcal{H}(X)$ give rise to the same element in $\textnormal{Hyp}(X)$?
\end{frag}

\bigskip

 \noindent \textbf{Acknowledgements.}
We would like to thank Erwan Brugall\'e, Ilia Itenberg and Claus Scheiderer for useful discussions. {We are very grateful to Erwan Brugall\'e, Lionel Lang, Bernd Sturmfels, and to an anonymous referee for helpful comments on a preliminary version of this manuscript. }

\bibliographystyle{amsalpha}
\bibliography{biblio}
 \end{document}